\DeclareMathOperator{\lt}{LT}
\DeclareMathOperator{\syz}{Syz}
\DeclareMathOperator{\rep}{Rep}
\DeclareMathOperator{\sig}{Sig}
\DeclareMathOperator{\lm}{LM}
\DeclareMathOperator{\NM}{NM}
\DeclareMathOperator{\lcm}{LCM}
\DeclareMathOperator{\lc}{LC}
\DeclareMathOperator{\spoly}{SPoly}
\DeclareMathOperator{\poly}{Poly}
\DeclareMathOperator{\anc}{Anc}
\DeclareMathOperator{\cls}{cls}
\def\KK{\mathbbm{k}}
\def\I{\mathcal{I}}
\def\ri{\rangle}
\def\li{\langle}
\def\P{\mathcal{P}}
\def\M{\mathcal{M}}
\begin{document}
\title{Computation of Pommaret Bases Using Syzygies}
\author{Bentolhoda Binaei\inst{1} \and Amir Hashemi\inst{1,2} \and
  Werner M. Seiler\inst{3}}
\institute{Department of Mathematical Sciences, 
Isfahan University of Technology\\ Isfahan, 84156-83111, Iran;
\and School of Mathematics, Institute for Research in Fundamental
Sciences (IPM), Tehran, 19395-5746, Iran\\
\email{h.binaei@math.iut.ac.ir}\\
\email{Amir.Hashemi@cc.iut.ac.ir}
\and Institut f\"{u}r Mathematik,
Universit\"at Kassel\\
Heinrich-Plett-Stra\ss e 40, 34132 Kassel, Germany\\
\email{seiler@mathematik.uni-kassel.de}
}
\maketitle

\begin{abstract}
  We investigate the application of syzygies for efficiently computing
  (finite) Pommaret bases. For this purpose, we first describe a
  non-trivial variant of Gerdt's algorithm \cite{gerdt} to construct an
  involutive basis for the input ideal as well as an involutive basis for
  the syzygy module of the output basis.  Then we apply this new algorithm
  in the context of Seiler's method to transform a given ideal into
  quasi stable position to ensure the existence of a finite Pommaret basis
  \cite{Seiler2}.  This new approach allows us to avoid superfluous
  reductions in the iterative computation of Janet bases required by this
  method. We conclude the paper by proposing an involutive variant of the
  signature based algorithm of Gao et al.\ \cite{gvw} to compute
  simultaneously a Gr\"obner basis for a given ideal and for the syzygy
  module of the input basis. All the presented algorithms have been
  implemented in {\sc Maple} and their performance is evaluated via a set
  of benchmark ideals.
\end{abstract}

\section{Introduction}

{\em Gr\"obner bases} provide a powerful computational tool for a wide
variety of problems connected to multivariate polynomial ideals. Together
with the first algorithm to compute them, they were introduced by
Buchberger in his PhD thesis \cite{Bruno1}. Later on, he discovered two
criteria to improve his algorithm \cite{Bruno2} by omitting superfluous
reductions. In 1983, Lazard \cite{Daniel83} developed a new approach by
using linear algebra techniques to compute Gr\"obner bases. In 1988,
Gebauer and M\"oller \cite{gm}, by interpreting Buchberger's criteria in
terms of syzygies, presented an efficient way to improve Buchberger's
algorithm. Furthermore, M\"oller et al.\ \cite{MMT} extended this idea and
described the first {\em signature-based} algorithm to compute Gr\"obner
bases. In 1999, Faug\`{e}re \cite{F4}, by applying fast linear algebra on
sparse matrices, found his F$_4$ algorithm to compute Gr\"{o}bner
bases. Then, he introduced the well-known F$_5$ algorithm \cite{F5} that
uses two new criteria (F$_5$ and {\em IsRewritten}) based on the idea of
signatures and that performs no useless reduction as long as the input
polynomials define a (semi-)regular sequence. Finally, Gao et
al. \cite{gvw} presented a new approach to compute simultaneously Gr\"obner
bases for an ideal and its syzygy module.
 
Involutive bases may be considered as a special kind of non-reduced
Gr\"obner bases with additional combinatorial properties. They originate
from the works of Janet \cite{janet} on the analysis of partial
differential equations. By evolving related methods used by Pommaret
\cite{pommaret}, the notion of {\em involutive polynomial bases} was
introduced by Zharkov and Blinkov \cite{zharkov}. Later, Gerdt and Blinkov
\cite{zbMATH01969239} generalised these ideas to the concepts of {\em
  involutive divisions} and {\em involutive bases} for polynomial ideals to
produce an effective alternative approach to Buchberger's algorithm (for
the efficiency analysis of an implementation of Gerdt's algorithm
\cite{gerdt}, we refer to the web pages {\tt
  http://invo.jinr.ru}). Recently, Gerdt et al.\ \cite{zbMATH06264259}
proposed a signature-based approach to compute involutive bases.

In this article we discuss effective approaches to compute involutive bases
and in particular {\em Pommaret bases}. These bases are a special kind of
involutive bases introduced by Zharkov and Blinkov \cite{zharkov}. While
finite Pommaret bases do not always exist, every ideal in a sufficiently
generic position has one (see \cite{hss:detgen} for an extensive discussion
of this topic). A finite Pommaret basis reflects many (homological)
properties of the ideal it generates. For example, many invariants like
dimension, depth and Castelnuovo-Mumford regularity can be easily read off
from it. We note that all these invariants remain unchanged under
coordinate transformations. We refer to \cite{Seiler} for a comprehensive
overview of the theory and applications of Pommaret bases.

We first propose a variant of Gerdt's algorithm to compute an involutive
basis which simultaneously determines an involutive basis for the syzygy
module of the output basis. Based on it, we improve Seiler's method
\cite{Seiler2} to compute a linear change of coordinates which brings the
input ideal into a generic position so that the new ideal has a finite
Pommaret basis. Then, as a related work, we describe an involutive version
of the approach by Gao et al.\ \cite{gvw} to compute simultaneously
Gr\"obner bases of a given ideal and of the syzygy module of the input
basis. All the algorithm described in this paper have been implemented in
{\sc Maple} and their efficiency is illustrated via a set of benchmark
ideals.

This paper is organized as follows. In Section \ref{Sec:1}, we review basic
definitions and notations related to involutive bases. Section \ref{Sec:2}
is devoted to a variant of Gerdt's algorithm which also computes an
involutive basis for the syzygy module of the output basis. In Section
\ref{Sec:3}, we show how to apply it in the computation of Pommaret
bases. Finally in Section \ref{Sec:4}, we conclude by presenting an involutive variant
of the algorithm of Gao et al.\ by combining it with Gerdt's algorithm.
\section{Preliminaries}
\label{Sec:1}
In this section, we review basic notations and preliminaries needed in the
subsequent sections. Throughout this paper, we assume that
$\P=\KK[x_{1},\ldots,x_{n}]$ is the polynomial ring over an infinite field
$\KK$. We consider polynomials $f_1,\ldots ,f_k\in \P$ and the ideal
$\I=\li f_1,\ldots ,f_k \ri$ generated by them. The total degree and the
degree w.r.t. a variable $ x_{i} $ of a polynomial in $f\in \P$ are denoted
by $ \deg(f) $ and $ \deg_{i}(f) $, respectively. In addition,
$\M=\lbrace x_{1}^{\alpha_{1}}\cdots x_{n}^{\alpha_{n}} \mid \alpha_{i}\geq
0,\,1 \leq i \leq n \rbrace$ stands for the monoid of all monomials in
$\P$. We use throughout the reverse degree lexicographic ordering with
$x_n\prec \cdots \prec x_1$. The leading monomial of a given polynomial
$ f \in \P$ w.r.t. $ \prec $ is denoted by $ \lm(f) $. If $ F \subset \P $
is a finite set of polynomials, $ \lm(F) $ denotes the set
$ \lbrace \lm(f)\ \mid f \in F \rbrace $. The leading coefficient of $ f $,
denoted by $ \lc(f) $, is the coefficient of $ \lm(f) $. The leading term
of $f$ is defined to be $ \lt(f)=\lm(f)\lc(f) $. A finite set
$ G= \lbrace g_{1},\ldots , g_{t} \rbrace \subset \P$ is called a {\em
  Gr\"obner basis} of $ \I $ w.r.t $ \prec $ if
$ \lm(\I)= \langle \lm(g_{1}),\ldots , \lm(g_{t}) \rangle $ where
$ \lm(\I)= \langle \lm(f) \,\,\vert \,\, f \in \I \rangle $. We refer
e.g. to the book of Cox et al. \cite{little} for further details on
Gr\"obner bases.

An analogous notion of Gr\"obner bases may be defined for sub-modules of
$\P^t$ for some $t$, see \cite{cox}. In this direction, let us recall some
basic notations and results. Let $\{ \mathbf{e}_1 ,\ldots,\mathbf{e}_t \}$
be the standard basis of $ \P^t $.  A module monomial in $ \P^t $ is an
element of the form $ x^{\alpha} \mathbf{e}_i $ for some $ i $, where
$ x^{\alpha} $ is a monomial in $\P$. So, each $ f \in \P^t $ can be
written as a $ \KK $-linear combination of module monomials in $ \P^t $.  A
total ordering $<$ on the set of monomials of $ \P^t $ is called a {\em
  module monomial ordering} if the following conditions are satisfied:
\begin{itemize}
\item
 if $\mathbf{m}$ and $\mathbf{n}$ are two module monomials such that
$\mathbf{n} < \mathbf{m}$ and $ x^{\alpha} \in \P$ is a monomial  then $x^{ \alpha} \mathbf{n} < x^{ \beta} \mathbf{m}$,
\item
$ < $ is well-ordering.
\end{itemize}
In addition, we say that $x^{\alpha}\mathbf{e} _i$ divides
$x^\beta\mathbf{e} _j$ if $i = j$ and $x^\alpha$ divides $x^\beta$. Based
on these definitions, one is able to extend the theory of Gr\"obner bases
to sub-modules of the  $\P$-modules of finite rank. Some well-known
examples of module monomial ordering are term over position (TOP), position
over term (POT) and the Schreyer ordering.

\begin{definition}
Let $\{g_1 , \ldots , g_t \} \subset \P$ and $ \prec $ a monomial ordering on $ \P $. We define the {\em Schreyer module ordering} on $\P^t$ as follows: We write $x^{ \alpha}\mathbf{e}_i \prec_{s}  x^{ \beta}\mathbf{e}_j$ if either $\lm(x^{\alpha} g_i ) \prec
\lm(x^{\beta} g_j )$, or $\lm(x^{\alpha} g_i ) = \lm(x^{\beta} g_j )$ and $j < i$.
\end{definition}
Schreyer proposed in his master thesis \cite{schreyer} a slight modification of Buchberger's algorithm to compute a Gr\"obner basis for the syzygy module of a Gr\"obner basis.
\begin{definition} 
Let us consider $ G=(g_{1}, \ldots ,g_{t}) \in \P^{t}$. The (first) {\em syzygy module} of $G$ is defined to be $  \syz(G)= \lbrace (h_{1}, \ldots , h_{t}) \ | \  h_{i} \in \P, \sum _{i=1}^{t}h_{i}g_{i}=0 \rbrace$.
\end{definition} 

Let $G = \lbrace g_{1} , \ldots , g_{t} \rbrace $ be a Gr\"obner basis. By
Buchberger's criterion, each $S$-polynomial has a standard representation:
$\spoly(g_{i},g_{j})=a_{ji}m_{ji}g_{i}-a_{ij}m_{ij}g_{j}=h_{ij1}g_{1}+\cdots
+h_{ijt}g_{t}$ where $a_{ji},a_{ij}\in \KK$, $ h_{ijl}\in \P $ and
$m_{ji},m_{ij}$ are monomials. Let $\mathbf{S}_{ij}=a_{ji}m_{ji}{\bf
  e}_{i}-a_{ij}m_{ij}{\bf e}_{j} -h_{ij1}{\bf e}_{1}-\cdots -h_{ijt}{\bf
  e}_{t}$ be the corresponding syzygy.
\begin{theorem}[Schreyer's Theorem]
With the above introduced notations,  the set $\{\mathbf{S}_{ij}  \ | \ 1\le i<j \le t\}$ is a Gr\"obner basis for  $ \syz(g_{ 1} , \ldots , g_{ t} )$ w.r.t. $\prec_s$.
\end{theorem}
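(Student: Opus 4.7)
The plan is to first pin down the $\prec_s$-leading module monomial of each $\mathbf{S}_{ij}$, and then show that every nonzero element of $\syz(G)$ has its $\prec_s$-leading term divisible by one of these leading monomials; together with the trivial observation that each $\mathbf{S}_{ij}$ is by construction a genuine syzygy, this proves the theorem.

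For the first step, note that by definition of the $S$-polynomial one has $m_{ji}\lm(g_i)=m_{ij}\lm(g_j)=\lcm(\lm(g_i),\lm(g_j))$, so the two module monomials $m_{ji}\mathbf{e}_i$ and $m_{ij}\mathbf{e}_j$ carry the same weight under $\prec_s$. The tie-breaking rule (smaller index wins) together with $i<j$ then yields $m_{ij}\mathbf{e}_j \prec_s m_{ji}\mathbf{e}_i$. Moreover, the standard-representation hypothesis forces $\lm(h_{ijl}g_l)\prec \lm(\spoly(g_i,g_j))\prec m_{ji}\lm(g_i)$ for every $l$, so every module monomial appearing in $-\sum_l h_{ijl}\mathbf{e}_l$ has weight strictly below $m_{ji}\lm(g_i)$. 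Conclusion: $\lm_{\prec_s}(\mathbf{S}_{ij})=m_{ji}\mathbf{e}_i$ (with coefficient $a_{ji}$).

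For the second step, take any nonzero $\mathbf{h}=\sum_l h_l\mathbf{e}_l\in\syz(G)$ and write $\lm_{\prec_s}(\mathbf{h})=x^\gamma\mathbf{e}_i$, with weight $M := x^\gamma\lm(g_i)$. Since $\sum_l h_l g_l = 0$ in $\P$, the terms of weight exactly $M$ must cancel, so there must be at least two distinct indices $l$ contributing a monomial $x^\beta$ in $h_l$ with $x^\beta\lm(g_l)=M$. Among these, the index $i$ is the smallest (otherwise the tie-breaking rule in $\prec_s$ would have selected a strictly larger leading module monomial). Pick any other contributing index $j>i$ together with a monomial $x^\delta$ in $h_j$ satisfying $x^\delta\lm(g_j)=M$.

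For the last step, from $x^\gamma\lm(g_i)=M=x^\delta\lm(g_j)$ it follows that $\lcm(\lm(g_i),\lm(g_j))=m_{ji}\lm(g_i)$ divides $M$; writing $M=u\cdot m_{ji}\lm(g_i)$ gives $x^\gamma = u\cdot m_{ji}$, hence $m_{ji}\mathbf{e}_i$ divides $x^\gamma\mathbf{e}_i = \lm_{\prec_s}(\mathbf{h})$, which is exactly the Gr\"obner basis divisibility condition. The main pitfall I expect is bookkeeping rather than depth: one must read the tie-breaking direction in $\prec_s$ correctly so that $\lm_{\prec_s}(\mathbf{S}_{ij})=m_{ji}\mathbf{e}_i$ (not $m_{ij}\mathbf{e}_j$), and one must be careful that the lower-order terms of the standard representation cannot spoil this leading term — both handled by the strict inequality built into the notion of a standard representation.
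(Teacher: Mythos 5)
Your argument is correct: it is the standard proof of Schreyer's theorem (which the paper itself states without proof, citing Schreyer), namely identifying $\lm_{\prec_s}(\mathbf{S}_{ij})=m_{ji}\mathbf{e}_i$ and then showing that the top-weight cancellation in any syzygy produces a second index $j>i$ whose presence forces $\lcm(\lm(g_i),\lm(g_j))$ to divide the weight of the leading module monomial. The only cosmetic slip is that a standard representation gives $\lm(h_{ijl}g_l)\preceq\lm(\spoly(g_i,g_j))$ rather than a strict inequality; since $\lm(\spoly(g_i,g_j))\prec m_{ji}\lm(g_i)$ is strict, your conclusion that all trailing module monomials of $\mathbf{S}_{ij}$ lie strictly below $m_{ji}\mathbf{e}_i$ is unaffected.
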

\begin{example}
  Let $ F= \lbrace  xy-x, x^2-y \rbrace  \subset \KK[x,y]  $. The Gr\"obner basis of $ F $ w.r.t. $x\prec_{dlex} y$ is $ {G= \lbrace g_{1}=xy-x,g_{2}=x^2-y,g_{3}=y^2-y \rbrace} $ and the Gr\"obner basis of $\syz(g_1,g_2,g_3)$ is $ \lbrace (x,-y+1,-1),(-x,y^{2}-1,-x^{2}+y+1), (y,0,-x) \rbrace$.
\end{example}
If $F=\{f_1,\ldots ,f_k\}$ is {\em not} a Gr\"obner basis, Wall \cite{Wall}
proposed an effective method to compute $\syz(F)$. If the extended set
$G=f_1,\ldots ,f_k,f_{k+1},\ldots ,f_t$ is a Gr\"obner basis of $\langle F\rangle$, then
$\syz(F)=\{A{\bf s} \ | \  {\bf s}\in \syz(G)\}$ where $A$ is a
matrix such that $G=FA$.

We conclude this section by recalling some definitions and results from the theory of  involutive bases (see \cite{gerdt,Seiler} for more details).  Given a set of polynomials, an involutive division partitions the variables into two disjoint subsets of {\em multiplicative} and {\em non-multiplicative} variables. 
\begin{definition} 
  An {\em involutive division} $ \mathcal{L} $ is given on $\M $ if for any finite set $ U \subset \M $ and any $ u \in U $, the set of variables is partitioned into the subsets of multiplicative variables $ M_{ \mathcal{L}}(u,U) $ and non-multiplicative variables $NM _{\mathcal{L}}(u,U) $ such that the following conditions hold where $ \mathcal{L}(u,U) $ denotes the monoid generated by $M_{ \mathcal{L}}(u,U) $:
\begin{enumerate}
\item
 $ v,u \in U$, $u \mathcal{L}(u,U) \cap v \mathcal{L}(v,U) \neq \emptyset  $ $\Rightarrow$ $ u \in v\mathcal{L}(v,U)$ or $v \in u \mathcal{L}(u,U) $,
\item
$ v \in U$, $ v \in u\mathcal{L}(u,U)$ $ \Rightarrow $ $\mathcal{L}(v,U) \subset \mathcal{L}(u,U) $,
\item
$V \subset U$ and $ u \in V$ $ \Rightarrow $ $ \mathcal{L}(u,U) \subset \mathcal{L}(u,V)$.
\end{enumerate}
We shall write $ u \mid _{\mathcal{L}} w $ if $ w \in u \mathcal{L}(u,U) $. In this case, $ u $ is called an $ \mathcal{L} $-involutive divisor of $ w $ and  $ w $ an $ \mathcal{L} $-involutive multiple of $ u $. 
\end{definition}

We recall the definitions of the Janet and Pommaret division, respectively.
\begin{example}
Let $ U \subset \P$ be a finite set of monomials. For each sequence $ d_{1} , \ldots ,d_{n} $ of non-negative integers and for each $ 1 \le i  \le n $ we define
$$ [d_{1}, \ldots ,d_{i}]= \lbrace  u \in U \ | \ d_{j}= \deg_{j}(u) ,\,\,1 \leq j \leq i \rbrace .$$
The variable $ x_{1} $ is Janet multiplicative (denoted by $\mathcal{J}$-multiplicative) for $ u \in U $ if $ \deg_{1}(u)= \max \lbrace \deg_{1}(v)\,\, \vert \,\,\,v \in U \rbrace $. For $ i > 1 $ the variable $ x_{i} $ is Janet multiplicative for  $u \in [d_{1}, \ldots ,d_{i-1}] $ if $ \deg_{i}(u)= \max \lbrace \deg_{i}(v) \ | \  v \in [d_{1}, \ldots ,d_{i-1}] \rbrace $.
\end{example}

\begin{example}\label{def:red1}
For $ u=x_{1}^{d_{1}} \cdots x_{k}^{d_{k}} $  with $ d_{k}> 0 $ the variables $ \lbrace x_{k}, \ldots ,x_{n}  \rbrace $ are considered as Pommaret multiplicative (denoted by $\mathcal{P}$-multiplicative) and the other variables as Pommaret non-multiplicative. For $ u = 1 $ all the variables are multiplicative. The integer $k$ is called the {\em class} of $u$ and is denoted by $\cls(u)$.
\end{example}

\begin{definition}
  The set  $ F \subset \P $ is called  {\em involutively head autoreduced} if for each $f\in F$ there is no $h\in F\setminus \{f\}$ with $\lm(h) \mid _{\mathcal{L}} \lm(f)$.
\end{definition}
\begin{definition}\label{def:red}
Let $I \subset \P$ be an ideal and $\mathcal{L}$ an involutive division. An involutively head autoreduced subset $H \subset \I$ is an {\em involutive basis} for $\I$  if for all $f \in \I$ there exists $h\in H$ so that $\lm(h) \mid_{\mathcal{L}} \lm(f)$.
\end{definition}
\begin{example}
  For the ideal $ \I=\langle xy, y^{2} , z \rangle \subset \KK[x,y,z]$ the
  set $ \lbrace xy, y^{2} , z, xz, yz \rbrace $ is a Janet basis, but there
  exists only an infinite Pommaret basis of the form
  $ \lbrace xy, y^{2} , z, xz, yz, x^{2}y, x^{2} z,\ldots , x^{k}
  y,x^{k}z,\ldots \rbrace $. One can show that every ideal has a finite
  Janet basis, i.\,e.\ the Janet division is Noetherian.
\end{example}

Gerdt  \cite{gerdt} proposed an efficient algorithm to construct involutive bases using  a completion process where prolongations of given elements by non-multiplicative variables are reduced. This process terminates in finitely many steps for any Noetherian division. In addition, Seiler  \cite{Seiler2} characterized the ideals having finite Pommaret bases by relating them to the notion of quasi stability. More precisely, a given ideal has a finite Pommaret basis iff it is in {\em quasi stable position} (or equivalently if the coordinates are $\delta$-regular) see \cite[Prop. 4.4]{Seiler2}.
\begin{definition}
A monomial ideal $\I$ is called \emph{quasi stable} if for any monomial $m \in \I$  and all integers $i, j,s$ with $1 \le j < i \le n$ and $s>0$, if $x_i^s\mid m$ there exists an integer $t\ge 0$ such that $x_j^tm/x_i^s\in \I$. A homogeneous ideal $\I$ is in {\em quasi stable position} if $\lm(\I)$ is quasi stable.
\end{definition}

\section{Computation of Involutive Basis for Syzygy Module}
\label{Sec:2}
We present now an effective approach to compute, for a given ideal,
simultaneously involutive bases of the ideal and of its syzygy module. We
first recall some related concepts and facts from \cite{Seiler2}. In
loc. cit., an involutive version of Schreyer's theorem is stated where
$S$-polynomials are replaced by non-multiplicative prolongations and an
involutive normal form algorithm is used.

More precisely, let $H \subset \P^t$ be a finite set for some
$t \in \mathbb{N} $, $ \prec_s$ the corresponding Schreyer ordering and
$\mathcal{L}$ an involutive division. We divide $H$ into $t$ disjoint
subsets
$H_{i} = \{ \mathbf{h} \in H \ | \ \lm(\mathbf{h}) = x^\alpha
\mathbf{e}_{i}, x^\alpha \in \M \}$. In addition, for each $i$, let
$B_{i} = \{x^{\alpha} \in \M \ \vert \ x^{\alpha}\mathbf{e}_{i} \in
\lm(H_{i}) \}$. We assign to each $\mathbf{h} \in H_{i}$ the multiplicative
variables
$M_{\mathcal{L},H,\prec} (\mathbf{h}) = \{ x_i \ | \ x_i \in
M_{\mathcal{L}, B_i} (x^\alpha) \ {\rm with} \
\lm(\mathbf{h})=x^{\alpha}\mathbf{e}_{i}\}$. Then, the definition of
involutive bases for sub-modules proceeds as for ideals.

Let $H = \{h_1,\ldots ,h_t \}\subset \P$ be an involutive basis. Let
$h_{i} \in H$ be an arbitrary element and $x_k$ a non-multiplicative
variable of it. From the definition of involutive bases, there exists a
unique $j$ such that $\lm(h_j)|x_k\lm(h_i)$. We order the elements of $H$
in such a way that $i<j$ (which is always possible for a continuous
division \cite[Lemma 5.5]{Seiler2}).  Then we find a unique involutive
standard representation $x_k h_i = \sum_{j=1}^t p_j^{(i,k)} h_j$ where
$p_j^{(i,k)} \in \KK [M_{\mathcal{L},H,\prec}(h_j)]$ and the corresponding
syzygy
$ \mathbf{S}_{i,k} =x_k \mathbf{e}_i - \sum_{j=1}^t p_j^{(i,k)}
\mathbf{e}_j \in \P^t$. We denote the set of all thus obtained syzygies by
$H_{\syz} = \{ \mathbf{S}_{i ,k} \ | \ 1 \leq i \leq t; x_k \in
\NM_{\mathcal{L},H,\prec} (h_i)\}$. An involutive division
$ \mathcal{L} $ is of {\em Schreyer type} if all sets
$NM _{\mathcal{L},H,\prec}(h)$ with $h \in H$ are again involutive bases
for the ideals defined by them. Both the Janet and the Pommaret divisions
are of Schreyer type.
\begin{theorem}(\cite[Thm. 5.10]{Seiler2})
  \label{thm1}
With the above notations, let $ \mathcal{L} $ be a continuous involutive division of Schreyer type w.r.t. $ \prec $ and $H$ an  involutive basis. Then $H_{\syz}$ is an $ \mathcal{L} $-involutive basis for $\syz(H)$  w.r.t. $\prec_s$.
\end{theorem}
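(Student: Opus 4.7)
The plan is to verify that $H_\syz$ satisfies the two defining conditions of an involutive basis w.r.t.\ $\prec_s$: involutive head autoreduction, and the property that every element of $\syz(H)$ has its leading monomial $\mathcal{L}$-divisible by the leading monomial of some element of $H_\syz$.

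First I would pin down $\lm_{\prec_s}(\mathbf{S}_{i,k}) = x_k \mathbf{e}_i$. Because $x_k h_i = \sum_j p_j^{(i,k)} h_j$ is an involutive standard representation, every summand satisfies $\lm(p_j^{(i,k)} h_j) \preceq x_k \lm(h_i)$, with equality only for the unique $j$ whose $\lm(h_j)$ involutively divides $x_k \lm(h_i)$. The preliminary ordering of $H$ ensures this $j$ exceeds $i$, so the Schreyer tie-break by smaller position index places $x_k\mathbf{e}_i$ strictly above the competing terms. For involutive head autoreduction of $H_\syz$, I would partition the generators by position: the leading module monomials in position $i$ form the set $\{x_k \mathbf{e}_i : x_k \in \NM_{\mathcal{L},H,\prec}(h_i)\}$, and since two distinct single variables never divide each other, involutive divisibility among these monomials is impossible.

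The heart of the proof is the coverage condition. Let $\mathbf{s} \in \syz(H)$ with $\lm_{\prec_s}(\mathbf{s}) = x^\beta \mathbf{e}_i$. The central claim is that $x^\beta$ must contain some non-multiplicative variable of $h_i$. To establish it, I would argue by contradiction: if $x^\beta$ lay in $\KK[M_{\mathcal{L},H,\prec}(h_i)]$, then $x^\beta \lm(h_i)$ would sit inside the involutive cone of $h_i$, which by involutive head autoreduction of $H$ combined with property~1 of involutive divisions is disjoint from the cones of all other $h_j$. Cancellation of the leading term $\lc(s_i) x^\beta \lm(h_i)$ of $s_i h_i$ in the identity $\sum_j s_j h_j = 0$ forces some index $j$ with $\lm(s_j h_j) = x^\beta \lm(h_i)$, and the Schreyer tie-breaking rule gives $j > i$. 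Since $\lm(h_j)$ cannot be an involutive divisor of $x^\beta \lm(h_i)$, the monomial $\lm(s_j)$ must itself contain a non-multiplicative variable $x_{k'}$ of $h_j$; subtracting a suitable monomial multiple of $\mathbf{S}_{j,k'}$ from $\mathbf{s}$ then produces a new syzygy with the same leading monomial but strictly simpler cancellation pattern at that level, and iteration combined with the well-foundedness of $\prec_s$ yields a contradiction. Granted the claim, selecting $x_k \in \NM_{\mathcal{L},H,\prec}(h_i)$ appropriately (the smallest-index such variable appearing in $x^\beta$ for the Pommaret case) and invoking the Schreyer type hypothesis ensures $x_k \mathbf{e}_i \mid_{\mathcal{L}} x^\beta \mathbf{e}_i$, completing the verification.

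The main obstacle will be the descent argument inside the central claim: one must verify that the successive subtractions used to eliminate the cancelling terms are genuine $\mathcal{L}$-involutive reductions of the module element rather than mere polynomial manipulations. This is precisely where the continuity of $\mathcal{L}$ and the Schreyer type hypothesis do essential work — continuity allows a consistent ordering of $H$ compatible with the involutive division relations, while Schreyer type guarantees that the non-multiplicative variable sets encountered at each stage are themselves involutively well-behaved, so that the multiplicative cones appearing in the reductions contain the monomials actually used.
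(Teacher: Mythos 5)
You should first be aware that the paper does not prove this statement at all: it is quoted verbatim from \cite[Thm.~5.10]{Seiler2}, so your proposal has to be judged on its own merits rather than against an in-paper argument. Your overall strategy is the natural one and is essentially sound: you correctly get $\lm_{\prec_s}(\mathbf{S}_{i,k})=x_k\mathbf{e}_i$ from the ordering convention $i<j$ plus the tie-break in $\prec_s$; involutive head autoreduction of $H_{\syz}$ is indeed automatic because the leading monomials in each position are distinct single variables; the completeness condition correctly reduces to the claim that $x^\beta=\lm(s_i)$ must contain a variable non-multiplicative for $h_i$, after which the Schreyer-type hypothesis (each $\NM_{\mathcal{L},H,\prec}(h_i)$ is an involutive basis of the monomial ideal it generates) converts ordinary divisibility by some $x_k\in\NM_{\mathcal{L},H,\prec}(h_i)$ into involutive divisibility of $x^\beta\mathbf{e}_i$ with respect to the set $B_i$. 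The cancellation analysis producing an index $j>i$ with $\lm(s_j)\lm(h_j)=x^\beta\lm(h_i)$, and the disjointness of involutive cones forcing $\lm(s_j)$ to contain a non-multiplicative variable of $h_j$, are both correct.

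The step you must tighten is the termination of the descent: ``well-foundedness of $\prec_s$'' is not the operative reason, since the leading monomial $x^\beta\mathbf{e}_i$ is unchanged by your subtractions. What actually decreases is this: all terms $x^\gamma\mathbf{e}_l$ of $\mathbf{s}$ with $x^\gamma\lm(h_l)=m:=x^\beta\lm(h_i)$ are tied in the first comparison of $\prec_s$ and hence ordered by position, and subtracting $c\,(\lm(s_j)/x_{k'})\,\mathbf{S}_{j,k'}$ deletes the one in position $j$ while creating at most one new term at level $m$, sitting in the unique position $l_0$ with $\lm(h_{l_0})\mid_{\mathcal{L}}x_{k'}\lm(h_j)$ --- and $l_0>j$ precisely by the continuity-based ordering convention. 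Thus the multiset of positions $\ne i$ carrying a term at level $m$ strictly ``migrates upward,'' is bounded by $t$, yet can never become empty by the coefficient argument; that is the contradiction. Note also that these subtractions need not be involutive reductions for the contradiction to go through, so the worry in your final paragraph is unfounded. A cleaner route that avoids the descent entirely (and is closer in spirit to the source): involutively reduce \emph{every} term $x^\gamma\mathbf{e}_l$ of $\mathbf{s}$ with $x^\gamma\notin\KK[M_{\mathcal{L},H,\prec}(h_l)]$ by $H_{\syz}$ (possible by Schreyer type); the process terminates, ends with a syzygy all of whose components are multiplicative, which must vanish by uniqueness of involutive standard representations of $0$, and so every syzygy acquires an involutive standard representation with respect to $H_{\syz}$, from which the basis property follows.
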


We now present a non-trivial variant of Gerdt's algorithm \cite{gerdt}
computing simultaneously a minimal involutive basis for the input ideal and
an involutive basis for the syzygy module of this basis. It uses an
analogous idea as the algorithm given in \cite{casc}. However, since we aim
at determining also a syzygy module, we must save the traces of all
reductions and for this reason we cannot use the syzygies to remove useless
reductions.

\begin{algorithm}[H]
\caption{{\sc InvBasis}}
\begin{algorithmic}[1]
 \REQUIRE A finite set  $ F\subset \P$; an involutive division $ \mathcal{L} $; a monomial ordering $ \prec $ 
 \ENSURE A minimal $ \mathcal{L} $-basis for $ \langle F \rangle $ and an $ \mathcal{L} $-basis for syzygy module of this basis.
 \STATE $F:=$sort$(F,\prec)$
 \STATE$T:= \lbrace  (F[1],F[1], \emptyset ,\textbf{e}_{1},false)  \rbrace$ 
 \STATE$Q:= \lbrace (F[i],F[i], \emptyset ,\textbf{e}_{i},false)  \ | \ i=2, \ldots ,\vert F \vert \rbrace$
  \STATE $S:= \lbrace \rbrace$ and $j:=|F|$
\WHILE { $Q \neq \emptyset $}
\STATE $Q:=$sort$(Q,\prec_{s})$
\STATE select and remove $p:=Q[1]$ from $Q$
\STATE $ h:=$ {\sc InvNormalForm}$(p,T, \mathcal{L}, \prec) $
\IF { $ h[1]=0 $  }
\STATE $ S:=S \cup \{h[2]\}$ 
\ENDIF
\IF { $ h[1]=0 $  and  $ \lm( \poly(p)) = \lm(\anc(p)) $ }
\STATE $ Q:= \lbrace q \in Q\,\, \vert \,\, \anc(q) \neq \poly(p) \ {\rm or} \ q[5]=true \rbrace $
\ENDIF
\IF {$p[5]=true$}
\STATE $q:=${\sc Update}$(q,p)$ for each $q\in T$
\ENDIF
\IF { $ h[1] \neq 0 $  and $ \lm(\poly(p)) \neq \lm(h) $ }
\FOR {$ q \in T  $  with proper conventional division $ \lm(h[1]) \mid \lm(\poly(q)) $}
\STATE $ Q:=Q \cup \lbrace [q[1],q[2],q[3],q[4],true] \rbrace $
\STATE $ T:=T \setminus \lbrace q \rbrace $
\ENDFOR
\STATE $j:=j+1$ and $ T:=T \cup \lbrace ( h[1],h[1],\emptyset,\textbf{e}_{j},false )  \rbrace $ 
\ELSE
\STATE $ T:=T \cup \lbrace ( h[1] ,\anc(p),\NM(p),h[2],false)  \rbrace $ 
\ENDIF
\FOR { $ q \in T  $ and $  x \in NM_{ \mathcal{L}}( \lm(\poly(q)), \lm(\poly(T)) \setminus \NM(q))$}
\STATE $ Q:=Q \cup  \lbrace (x. \poly(q),\anc(q),\emptyset,x.\rep(q),false) \rbrace    $
\STATE { \small $\NM(q):=\NM(q)  \cup NM_{\mathcal{L}}(\lm( \poly(q)),\lm( \poly(T))) \cup \lbrace x \rbrace$}
\ENDFOR
\ENDWHILE 
\STATE  {\bf{return}} $(\poly(T), \{\rep(p)-\textbf{e}_{{\rm index}(p)} \ | \ p\in T\}\cup S)$
\end{algorithmic}
\end{algorithm} 

The algorithm \textsc{InvBas} relies on the following data structure for
polynomials. To each polynomial $f$, we associate a quintuple
$ p = ( f, g, V,{\bf q},flag ) $.  The first entry $f = \poly(p)$ is the
polynomial itself, $g=\anc(p)$ is the ancestor of $f$ (realised as a
pointer to the quintuple associated with the ancestor) and $V=\NM (p)$ is
its list of already processed non-multiplicative variables. The fourth
entry ${\bf q}=\rep(p) $ denotes the representation of $f$ in our current
basis, i.e. if
${\bf q}=\sum_{r\in T\cup Q}{h_r\textbf{e}_{{\rm index}(r)}}$ then
$f=\sum_{r\in T\cup Q}{h_r\poly(r)}$ where $h_r\in \P$ and ${\rm index}(r)$
gives the position of $r$ in the current list $T\cup Q$. The final entry is
a boolean flag. If $flag=true$ then at some stage of the algorithm $p$ has
been moved from $T$ to $Q$, otherwise $flag=false$.  We denote by
$\sig(p)=\lm_{\prec_s}(\rep(p))$ the signature of $p$. By an abuse of
notation, $\sig(f)$ also denotes $\sig(p)$. The same holds for the $\rep$
function. If $P$ is a set of quintuples, we denote by $ \poly(P)$ the set
$ \lbrace \poly(p) \ | \ p \in P \rbrace$. In addition, the functions
sort($X,\prec$) and sort($ X,\prec _{s} $) sort $X$ in increasing order
according to $ \lm(X)$ w.r.t. $ \prec $ and $\{\sig(p)\ \vert \ p\in X\}$
w.r.t. $\prec_s$, respectively. We remark that in the original form of
Gerdt's algorithm \cite{gerdt} the function sort($Q,\prec$) was applied to
sort the set of all non-multiplicative prolongations, however, in our
experiments we observed that using sort($Q,\prec_s$) increased the
performance of the algorithm.

Obviously, the representation of each polynomial must be updated whenever
the set $T\cup Q$ changes in a non-trivial way.  We remark that elements of
$Q$ can appear non-trivially in the representations of polynomials only if
they have been elements of $T$ at an earlier stage of the algorithm (recall
that such a move is noted in the flag of each quintuple), as all reductions
are performed w.r.t. $T$ only.  If updates are necessary, then they are
performed by the function \textsc{Update}.  Involutive normal forms are
computed with the help of the following subalgorithm taking care of the representations.

 \begin{algorithm}[H]
\caption{{\sc InvNormalForm}}
\begin{algorithmic}
 \REQUIRE A quintuple $ p $; a set of quintuples $ T $; a division $ \mathcal{L} $; a monomial ordering $ \prec $ 
 \ENSURE A normal form of $ p $ w.r.t. $ T $ and its new representation.
  \STATE $ h := \poly(p)  $ and  $G := \poly(T) $ and ${\bf q}:=  \rep(p)$
\WHILE {  $h$ contains a monomial $m$ which is $ \mathcal{L}$-divisible by $g \in G$ }
\IF { $ m = \lm(\poly(p)) $  and   {\sc C1}$(h, g)$}
\STATE
 {\bf{return}} $ ([0,\anc(p) \rep(\anc(g))-\anc(g) \rep(\anc(p))]) $
\ENDIF
\STATE $ h := h - (cm/\lt(g)).g $ where $c$ is the coefficient of $m$ in $h$
\STATE $ {\bf q}:={\bf q} -( cm/\lt(g)) \rep(g)$
\ENDWHILE 
\STATE  {\bf{return}} $( [h,{\bf q}])$
\end{algorithmic}
\end{algorithm}

Here we apply the involutive form of Buchberger's first criterion
\cite{gerdt}. We say that {\sc C1}$(p, g)$ is true if
$\lm(\anc(p))\lm(\anc(g)) = \lm(\poly(p))$. 

\begin{theorem}\label{thm2}
  If $ \mathcal{L} $ is a Noetherian continuous involutive division of
  Schreyer type then {\sc InvBasis} terminates in finitely many steps and
  returns a minimal involutive basis for its input ideal and also an
  involutive basis for the syzygy module of the constructed basis.
\end{theorem}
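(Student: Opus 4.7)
The plan is to split the claim into three assertions and handle them in order: termination, correctness of $\poly(T)$ as a minimal $\mathcal{L}$-basis of $\langle F\rangle$, and correctness of the accompanying output as an $\mathcal{L}$-basis of $\syz(\poly(T))$ with respect to the Schreyer ordering $\prec_s$.

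Termination and the first correctness claim are essentially the classical argument for Gerdt's algorithm, and the first thing I would check is that the additional bookkeeping (maintenance of $\rep$, the $flag$ mechanism, the calls to \textsc{Update}) never forces a polynomial to be inserted into $T$ that Gerdt's algorithm would not itself insert. Granted this, each element placed into $T$ contributes a leading monomial not involutively divisible by the leading monomials of the elements already present (those that become head-reducible are migrated back to $Q$ via the flag and then re-reduced), and the Noetherian property of $\mathcal{L}$ bounds the number of such leading monomials. Once $T$ stabilises, only finitely many non-multiplicative prolongations remain to be processed, and each is handled in bounded time by \textsc{InvNormalForm}. When the main loop terminates with $Q=\emptyset$, every non-multiplicative prolongation of every element of $T$ has been reduced to zero (directly, through an ancestor, or by criterion {\sc C1}) or absorbed into $T$ as a new basis element; this is exactly the involutive completion criterion, and minimality follows from the head-divisibility loop on lines 18--21.

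The delicate part is the syzygy claim. The key invariant I would maintain is that for every quintuple $p$ currently in $T\cup Q$
\[
\poly(p)=\sum_{r\in T\cup Q} h_r^{(p)}\poly(r),\qquad \rep(p)=\sum_{r\in T\cup Q} h_r^{(p)}\mathbf{e}_{\mathrm{index}(r)},
\]
and, moreover, that once both $p$ and $r$ belong to the final $T$ the coefficient $h_r^{(p)}$ lies in $\KK[M_{\mathcal{L},\poly(T),\prec}(\poly(r))]$. This invariant is preserved by the three operations that mutate $\rep$: the involutive reduction step inside \textsc{InvNormalForm}, the multiplication by a non-multiplicative variable when creating a new prolongation, and the \textsc{Update} step triggered whenever a referenced element migrates from $T$ to $Q$. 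Granted the invariant, each $p\in T$ yields the genuine syzygy $\rep(p)-\mathbf{e}_{\mathrm{index}(p)}$ (expressing $\poly(p)-\poly(p)=0$) with coefficients in the multiplicative variables, while each reduction to zero contributes a further syzygy collected into $S$. To conclude, I would verify that for every $h_i\in\poly(T)$ and every non-multiplicative $x_k$ of $h_i$ the syzygy $\mathbf{S}_{i,k}$ of Theorem~\ref{thm1} appears in the output: the prolongation $x_k h_i$ is eventually removed from $Q$, and the involutive standard representation produced for it by \textsc{InvNormalForm} is, up to sign, exactly $\mathbf{S}_{i,k}$. Since the family $\{\mathbf{S}_{i,k}\}$ is an $\mathcal{L}$-basis of $\syz(\poly(T))$ by Theorem~\ref{thm1}, the algorithm's output is too.

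The principal obstacle is the robustness of the $\rep$-invariant against the short-cuts built into the algorithm: the ancestor-based purge on lines 12--13, the boolean flag mechanism together with \textsc{Update}, and especially the {\sc C1} branch of \textsc{InvNormalForm}, which returns $\anc(p)\rep(\anc(g))-\anc(g)\rep(\anc(p))$ in place of a completed reduction. For each such short-cut I would have to show that the emitted vector is nevertheless a valid syzygy of $\poly(T)$ whose signature coincides with that of one of the $\mathbf{S}_{i,k}$, so that no element of the involutive syzygy basis is silently dropped. Equivalently, a Schreyer-type argument is needed to reconstruct any involutive standard representation of $x_k h_i$ from the syzygies actually stored, independently of the order in which prolongations were processed and of which representations were rewritten by \textsc{Update}.
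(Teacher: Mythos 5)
Your overall architecture coincides with the paper's: termination and minimality are delegated to Gerdt's results, and the syzygy claim is reduced, via Theorem~\ref{thm1}, to checking that for every $h_i$ in the final basis and every non-multiplicative variable $x_k$ a syzygy playing the role of $\mathbf{S}_{i,k}$ survives into the output. Your first two cases (prolongation reduces to zero; prolongation yields a new basis element $h_\ell$, giving $\rep(p)-\mathbf{e}_\ell$) are exactly the paper's first two cases. The problem is that the two remaining cases --- a prolongation discarded by the {\sc C1} branch of \textsc{InvNormalForm}, and a prolongation discarded by the ancestor purge in lines 12--13 --- are precisely where you stop and say ``I would have to show\dots''. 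Those are not peripheral robustness checks; they are the only places in the whole argument where something beyond bookkeeping has to be proved, so as written the proposal has a genuine gap at its core.

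For the record, here is how the paper discharges them. For the {\sc C1} case one writes $x_kh_i=u\anc(p)$, $\poly(g)=v\anc(g)$ and $\lm(x_kh_i)=m\lm(g)$; then $x_kh_i-m\poly(g)=u\anc(p)-mv\anc(g)$, and since {\sc C1} means $\lm(\anc(p))\lm(\anc(g))=\lcm(\lm(\anc(p)),\lm(\anc(g)))$, Buchberger's first criterion applied to the two \emph{ancestors} shows that $\anc(p)\rep(\anc(g))-\anc(g)\rep(\anc(p))$ is a genuine syzygy, and this is what gets stored in $S$. For the purge case the point is simpler than you anticipate: the purge fires only when $\anc(p)$ has reduced to zero, hence $h_i$ itself reduces to zero, is not a member of the final minimal basis, and no syzygy $\mathbf{S}_{i,k}$ is required for it at all. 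One further remark in your favour: you correctly insist that the {\sc C1}-emitted vector must have the right $\prec_s$-leading term (namely $x_k\mathbf{e}_i$, or at least an involutive divisor of it) for Theorem~\ref{thm1} to apply, since being \emph{some} syzygy is not enough to replace $\mathbf{S}_{i,k}$ in an involutive basis of $\syz(\poly(T))$. The paper's proof is silent on this verification, so the obligation you flag is real and is not fully discharged even in the published argument; completing your proof would require carrying out exactly that signature comparison.
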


\begin{proof}
  The termination of the algorithm is ensured by the termination of Gerdt's
  algorithm, see \cite{gerdt}. Let us now deal with its correctness. We
  first note that if an element $p$ is removed by Buchberger's criteria,
  then it is superfluous and by \cite[Thm.~2]{gerdt} the set $\poly(T)$
  forms a minimal involutive basis for $\langle F \rangle$. Thus, it
  remains to show that
  $R=\{\rep(p)-\textbf{e}_{{\rm index}(p)} \ | \ p\in T\}\cup S$ is an
  involutive basis for $\poly(T)=\{h_1,\ldots, h_t\}$
  w.r.t. $\prec_s$. Using Thm. \ref{thm1}, we must show that the
  representation of each non-multiplicative prolongation of the elements of
  $\poly(T)$ appears in $R$. Let us consider $h_i\in \poly(T)$ and a
  non-multiplicative variable $x_k$ for it. Then, due to the structure of
  the algorithm, $x_kh_i$ is created and studied in the course of the
  algorithm.

  Now, four cases can occur.  If $x_kh_i$ reduces to zero then we can write
  $x_k h_i = \sum_{j=1}^t p_j^{(i,k)} h_j$ where
  $p_j^{(i,k)} \in \KK [M_{\mathcal{L},H,\prec}(h_j)]$. Therefore the
  representation
  $x_k \mathbf{e}_i - \sum_{j=1}^t p_j^{(i,k)} \mathbf{e}_j \in \P^t$ is
  added to $S$ and consequently it appears in $R$. If the involutive normal
  form of $x_kh_i$ is non-zero then we can write
  $x_k h_i = \sum_{j=1}^t p_j^{(i,k)} h_j+h_\ell$ where
  $p_j^{(i,k)} \in \KK [M_{\mathcal{L},H,\prec}(h_j)]$. In this case, we
  add $h_\ell$ into $T$ and the representation component of $x_k h_i$ is
  updated to $x_k \mathbf{e}_i - \sum_{j=1}^t p_j^{(i,k)}
  \mathbf{e}_j$. Then, as we can see in the output of the algorithm,
  $x_k \mathbf{e}_i - \sum_{j=1}^t p_j^{(i,k)}
  \mathbf{e}_j-\mathbf{e}_\ell$ appears in $R$ as the syzygy correcponding
  to $x_k h_i$.

  The third case that may occur is that $x_kh_i$ is removed by Buchberger's
  first criterion. Assume that $p$ is the quintuple associated to $x_k h_i$
  and $g$ is another quintuple so that {\sc C1}$(p, g)$ is true. It follows
  that $\lm(\anc(p))\lm(\anc(g)) = \lm(\poly(p))$ holds. We may let
  $x_k h_i=u \anc(p)$, $\poly(g)=v \anc(g)$ and $\lm(x_k h_i)=m \lm(g)$ for
  some monomials $u$ and $v$ and term $m$ (assume that the polynomials are
  monic). Thus,
\begin{eqnarray*}
    x_k h_i -m\poly(g)=u \anc(p)-m v \anc(g).
\end{eqnarray*}
As $\lm(\anc(p))\lm(\anc(g))=\lcm(\lm(\anc(p)), \lm(\anc(g)))$,
Buchberger's first criterion applied to $\anc(p)$ and $\anc(g)$ yields that
$\anc(p) \rep(\anc(g))-\anc(g) \rep(\anc(p))$ is the corresponding syzygy
which is added to $S$.

The last case to be considered is that $x_kh_i$ is removed by the second
{\bf if}-loop in the main algorithm. In this case, we conclude that
$\anc(p)$ is reduced to zero and in consequence $h_i$ is reduced to
zero. So, $h_i$ is a useless polynomial and we do not need to keep $x_kh_i$
which ends the proof.$\qed$
\end{proof}

\begin{remark}
  There also exists an involutive version of Buchberger's second criterion
  \cite{gerdt}: {\sc C2}$(p, g) $ is true if
  $\lcm(\lm(\anc(p)), \lm(\anc(g)))$ properly divides $\lm(\poly(p))$. We
  cannot use this criterion in the {\sc InvNormalForm} algorithm. A
  non-multiplicative prolongation $x_kh_i$ removed by it is surely useless
  in the sense that is not needed for determining the involutive basis of
  $\I$, but it can nevertheless be necessary for the construction of its
  syzygy module.
\end{remark}

\begin{example}
  Let us consider the ideal $\I$ generated by $ F= \lbrace f_1=z^2, f_2=zy , f_3=xz-y, f_4=y^2 , f_5=xy-y, f_6=x^2-x+z \rbrace  \subset \KK[x,y,z]  $ from \cite[Ex. 5.6]{Seiler2}. Then,  $F$  is a Janet basis w.r.t. $z \prec y \prec x$. Since $x,y$ are non-multiplicative variables for $ f_1,f_2,f_3 $ and $x $ is non-multiplicative variable  for $f_4,f_5 $ then the following set is a Janet basis for the syzygy module of $F$:
 $ \{ y\mathbf{e}_1-z\mathbf{e}_2,x\mathbf{e}_1-z\mathbf{e}_3-\mathbf{e}_2 , y\mathbf{e}_2-z\mathbf{e}_4, x\mathbf{e}_2-z\mathbf{e}_5-\mathbf{e}_2 , y\mathbf{e}_3-z\mathbf{e}_5+\mathbf{e}_4-\mathbf{e}_2 , x\mathbf{e}_3-z\mathbf{e}_6+\mathbf{e}_5-\mathbf{e}_3+\mathbf{e}_1 , x\mathbf{e}_4-y\mathbf{e}_5-\mathbf{e}_4, x\mathbf{e}_5-y\mathbf{e}_6+\mathbf{e}_2 \} $.
\end{example}

\section{Application to Pommaret Basis Computation}
\label{Sec:3}

In this section we show how to apply the approach presented in the
preceeding section in the computation of Pommaret bases. The Pommaret
division is not Noetherian and thus a given ideal may not have a finite
Pommaret basis. However, a generic linear change of variables transforms
the ideal into quasi stable position where a finite Pommaret basis

exists. Seiler \cite{Seiler2} proposed a deterministic algorithm to compute
such a linear change by performing repeatedly an {\em elementary} linear
change and then a test on the Janet basis of the transformed ideal. Now, to
apply the method presented in this paper, we use the {\sc InvBasis}
algorithm to compute a minimal Janet basis $H$ for the input ideal and at
the same time a Janet basis for $\syz(H)$. Then, for each $h\in H$ we check
whether there exists a variable which is Janet but not Pommaret
multiplicative. If not, $H$ is a Pommaret basis and we are done. Otherwise,
we make an elementary linear change of variables, say $\phi$. Then, we
apply the following algorithm, {\sc NextInvBasis}, to compute a minimal
Janet basis for the ideal generated by $\phi(H)$ by applying
$\phi(\syz(H))$ to remove superfluous reductions. We describe first the
main procedure.

\begin{algorithm}[H]
\caption{{\sc QuasiStable}}
\begin{algorithmic}
 \REQUIRE  A finite set $F \subset \P$ of homogeneous polynomials and a monomial ordering $\prec$ 
 \ENSURE A linear change $\Phi$ so that $\langle \Phi(F) \rangle$ has a finite Pommaret basis
 \STATE $\Phi:=$the identity map
 \STATE $J,S:=${\sc InvBasis}$(F,\mathcal{J},\prec)$ and $ A:=${\sc Test}$(\lm(J)) $
 \WHILE  {$ A \neq true $} 
 \STATE $\phi:=A[3]\mapsto A[3]+cA[2]$ for a random choice of $c\in \KK$
 \STATE $Temp:=${\sc NextInvBasis}$(\Phi \circ \phi (J),\Phi\circ \phi (S),\mathcal{J},\prec)$
 \STATE $ B:=${\sc Test}$(\lm(Temp)) $
 \IF  {$ B \neq A $} 
 \STATE  $\Phi:=\Phi\circ\phi$ and  $A := B$
 \ENDIF
 \ENDWHILE
\STATE  {\bf{return}} $(\Phi)$
\end{algorithmic}
\end{algorithm} 

The function {\sc Test} receives a set of monomials forming a minimal Janet
basis and returns true if it is a Pommaret basis, too. Otherwise,, by
\cite[Prop.~2.10]{Seiler2}, there exists a monomial $m$ in the set for
which a Janet multiplicative variable (say $x_\ell$) is not Pommaret
multiplicative. In this case, the function returns
$(false,x_\ell,\cls(m))$. Using these variables, we construct an elementary
linear change of variables.

The \textsc{NextInvBasis} algorithm is similar to the {\sc InvBasis}
algorithm given above. However, the new algorithm computes only the
involutive basis of the input ideal generated by a set $H$. In addition, in
the new algorithm, we use $\syz(H)$ to remove useless reductions. Below,
only the differences between the two algorithms are exhibited.

\begin{algorithm}[H]
\caption{{\sc NextInvBasis}}
\begin{algorithmic}
 \REQUIRE A finite set $ F\subset \P $; a generating set $S$ for $\syz(F)$; an involutive division $ \mathcal{L} $; a monomial ordering $ \prec $
 \ENSURE A minimal involutive basis for $ \langle F \rangle $
 \STATE $\vdots$\qquad  \COMMENT{Lines 1--6 of  \textsc{InvBasis}}
 \STATE select and remove $p:=Q[1]$ from $Q$
\IF{ $\nexists {\bf s} \in S $  s.t  $  \lm_{\prec_s}({\bf s}) \mid \sig(p) $ }
\STATE $\vdots$\qquad  \COMMENT{Lines 8--30 of  \textsc{InvBasis}}
\ENDIF
\STATE $\vdots\qquad $ \COMMENT{Lines 31/32 of \textsc{InvBasis}}
\end{algorithmic}
\end{algorithm}

\begin{lemma}
  Let $H\subset \P$ and $S$ be a generating set for $\syz(H)$. For any invertible linear change of variables $\phi$, $\phi(S)$ generates $\syz(\phi(H))$.
\end{lemma}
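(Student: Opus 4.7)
The plan is to exploit the fact that an invertible linear change of variables $\phi$ extends to a $\KK$-algebra automorphism of $\P$, and hence induces a compatible map on $\P^t$ that sends $\syz(H)$ bijectively onto $\syz(\phi(H))$. First, I would set up notation: write $H = (h_1,\ldots,h_t)$ and let $\phi$ act on $\P^t$ componentwise, so that for $\mathbf{a} = (a_1,\ldots,a_t) \in \P^t$ we have $\phi(\mathbf{a}) = (\phi(a_1),\ldots,\phi(a_t))$. I would then record the key \emph{twisted linearity} identity $\phi(p \cdot \mathbf{a}) = \phi(p)\cdot \phi(\mathbf{a})$ for $p \in \P$ and $\mathbf{a} \in \P^t$, which is immediate because $\phi$ is a ring homomorphism acting componentwise.

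Next, I would observe that $\phi$ restricts to a bijection $\syz(H) \to \syz(\phi(H))$. If $\sum_{i=1}^t a_i h_i = 0$, applying the ring homomorphism $\phi$ gives $\sum_{i=1}^t \phi(a_i)\phi(h_i) = 0$, so $\phi(\mathbf{a}) \in \syz(\phi(H))$; conversely, $\phi^{-1}$ (also a linear change of variables) sends $\syz(\phi(H))$ back into $\syz(H)$, so we have a bijection.

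Finally, to prove generation, I would take an arbitrary $\mathbf{w} \in \syz(\phi(H))$ and consider its preimage $\phi^{-1}(\mathbf{w}) \in \syz(H)$. Since $S = \{\mathbf{s}_1,\ldots,\mathbf{s}_m\}$ generates $\syz(H)$, there exist $p_j \in \P$ with $\phi^{-1}(\mathbf{w}) = \sum_{j=1}^m p_j\, \mathbf{s}_j$. Applying $\phi$ and using the twisted linearity identity yields
\[
\mathbf{w} \;=\; \phi\!\left(\sum_{j=1}^m p_j\, \mathbf{s}_j\right) \;=\; \sum_{j=1}^m \phi(p_j)\, \phi(\mathbf{s}_j),
\]
which exhibits $\mathbf{w}$ as a $\P$-linear combination of the elements of $\phi(S)$. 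Hence $\phi(S)$ generates $\syz(\phi(H))$.

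The argument is essentially bookkeeping, and the only mild subtlety — the point I would emphasize carefully — is that $\phi$ is not $\P$-linear as a map $\P^t \to \P^t$, but rather semi-linear with respect to the automorphism $\phi$ of $\P$; the twisted linearity identity is precisely what allows the pushforward of a $\P$-linear combination to remain a $\P$-linear combination after replacing the coefficients $p_j$ by $\phi(p_j)$.
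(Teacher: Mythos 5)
Your proof is correct and follows essentially the same route as the paper's: apply $\phi$ to each syzygy relation to see $\phi(S)\subseteq\syz(\phi(H))$, then pull an arbitrary element of $\syz(\phi(H))$ back via $\phi^{-1}$, write it in terms of $S$, and push forward. Your explicit remark about the twisted ($\phi$-semi-linear) nature of the induced map on $\P^t$ is a nice clarification of a step the paper leaves implicit, but the argument is the same.
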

\begin{proof}
 Suppose that  $H=\{h_1,\ldots ,h_t\}$ and  $S=\{{\bf s_1},\ldots ,{\bf s_\ell}\}\subset \P^t$. Let ${\bf s_i}=(p_{i1},\ldots ,p_{it})$. Since $p_{i1}h_1+\cdots +p_{it}h_t=0$ and $\phi$ is a ring homomorphism then $\phi(p_{i1})\phi(h_1)+\cdots +\phi(p_{it})\phi(h_t)=0$ and therefore $\phi({\bf s_i})\in \syz(\phi(H))$. Conversely, assume that ${\bf s}=(p_1,\ldots ,p_t)\in \syz(\phi(H))$. This shows that $p_{1}\phi(h_1)+\cdots +p_{t}\phi(h_t)=0$. By invertibility of $\phi$  we have $(\phi^{-1}(p_1),\ldots ,\phi^{-1}(p_t))\in \syz(H)$. From assumptions, we conclude that $(\phi^{-1}(p_1),\ldots ,\phi^{-1}(p_t))=g_1{\bf s_1}+\dots +g_\ell{\bf s_\ell}$ for some $g_i\in \P$. By applying $\phi$ on both sides of this equality, we can deduce that ${\bf s}$ is generated by $\phi(S)$ and the proof is complete. \qed
\end{proof}

\begin{theorem}
  The algorithm {\sc QuasiStable} terminates in finitely many steps and
  returns for a given homogeneous ideal a linear change of variables
  s.t. the transformed ideal possesses a finite Pommaret basis.
\end{theorem}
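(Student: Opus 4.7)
The plan is to split the argument into correctness and termination. For correctness, observe that the while loop exits only when the Boolean flag returned by \textsc{Test} is true, which by the specification of that subroutine means that the current minimal Janet basis of $\langle \Phi(F)\rangle$ is simultaneously a Pommaret basis. By \cite[Prop.~4.4]{Seiler2}, quoted right after the definition of quasi stable position, this forces $\langle \Phi(F)\rangle$ to be in quasi stable position, so it admits a finite Pommaret basis (namely, the very Janet basis just computed). What needs to be checked in addition is that each call to \textsc{NextInvBasis} does produce a minimal Janet basis of $\langle \Phi\circ\phi(F)\rangle$. The preceding lemma supplies exactly this: since $\phi$ and $\Phi$ are invertible linear changes, $\Phi\circ\phi(S)$ generates $\syz(\Phi\circ\phi(H))$, so the signature test $\lm_{\prec_s}(\mathbf{s})\mid\sig(p)$ removes only prolongations $p$ whose normal form is forced to be zero by an honest syzygy. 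The remaining structural argument that \textsc{NextInvBasis} outputs a minimal involutive basis is then identical to that of Theorem~\ref{thm2}.

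For termination, the key observation is that the sequence of coordinate transformations performed by \textsc{QuasiStable} is precisely that of Seiler's deterministic procedure in \cite{Seiler2}: the \textsc{Test} call on the minimal Janet basis dictates which elementary change $\phi\colon x_{A[3]}\mapsto x_{A[3]}+c\,x_{A[2]}$ is applied, the transformed basis is recomputed, and the composite transformation $\Phi$ is updated exactly when \textsc{Test} reports a genuinely different obstruction. Seiler's argument exhibits a well-founded invariant on $\lm(J)$ that strictly decreases whenever $\Phi$ is updated, so only finitely many updates of $\Phi$ can occur before the ideal reaches quasi stable position and \textsc{Test} returns true.

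The main obstacle I anticipate lies in the iterations where $B=A$, i.e.\ when the new random choice of $c$ fails to change the obstruction and $\Phi$ is not updated. One must explain why the loop cannot get stuck forever in such a state. The resolution is that the set of values of $c\in\KK$ for which $\phi$ fails to improve the current invariant lies in a proper Zariski-closed subset of $\KK$; since $\KK$ is infinite, a generic choice of $c$ eventually falls outside it and produces strict progress. Combined with the finite chain condition on the relevant invariant of $\lm(J)$, this yields termination after finitely many iterations of the outer loop.
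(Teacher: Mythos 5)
Your overall decomposition --- termination via Seiler's results on elementary linear changes, correctness reduced to the soundness of the syzygy-based removal inside \textsc{NextInvBasis} --- matches the paper's, and your remark that the bad choices of $c$ lie in a proper Zariski-closed subset of the infinite field $\KK$ is a sensible supplement to the paper's bare citation of \cite[Remark~9.11]{Seiler2} for termination. The problem is the one step you dispose of in a single clause: the claim that the test $\lm_{\prec_s}({\bf s})\mid\sig(p)$ ``removes only prolongations $p$ whose normal form is forced to be zero by an honest syzygy.'' That is not what the criterion guarantees, and it is not the reason the removal is safe. Divisibility of $\sig(p)$ by the leading module term of a syzygy of the input set does not imply that $\poly(p)$ reduces to zero modulo the current basis; it only implies that the representation of $\poly(p)$ in terms of the generators can be rewritten so that its leading module term drops strictly below $\sig(p)$. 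The polynomial itself may well have a nonzero involutive normal form, so ``superfluous'' here cannot mean ``reduces to zero.''

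This is precisely the point on which the paper's proof spends essentially all of its substantive effort, and it cannot be delegated to Theorem~\ref{thm2}, whose proof treats the four fates of a prolongation in \textsc{InvBasis} but never the syzygy-based removal, which is a new fifth case. The missing argument runs as follows: writing $\lm_{\prec_s}({\bf s})=\lm(p_1){\bf e}_1$ (w.l.o.g.) for the syzygy $p_1f_1+\cdots+p_kf_k=0$, a suitable monomial multiple of ${\bf s}$ cancels the leading module term of the representation of $\poly(p)$ and replaces the corresponding product by a combination of products $mf_i$ whose module monomials $m{\bf e}_i$ are strictly smaller; hence $\poly(p)$ admits a representation entirely through products of signature strictly smaller than $\sig(p)$. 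Because $Q$ is processed in increasing order with respect to $\prec_s$, those lower-signature products are (or will be) examined by the algorithm, so discarding $p$ loses no information needed for the involutive basis and no infinite rewriting loop can arise. Without this argument, or an equivalent substitute, your correctness claim for \textsc{NextInvBasis} --- and hence for \textsc{QuasiStable} --- is unsupported.
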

\begin{proof}
  Seiler \cite[Prop. 2.9]{Seiler2} proved that for a generic linear change
  of variables $\phi$, the ideal $\langle \phi(F) \rangle$ has a finite
  Pommaret basis. He also showed that the process of finding such a linear
  change, by applying elementary linear changes, terminates in finitely
  many steps, see \cite[Remark~9.11]{Seiler2} (or \cite{hss:detgen}). These
  arguments establish the finite termination of the algorithm. To prove the
  correctness, using Thm.~\ref{thm2}, we must only show that if $p\in Q$ is
  removed by ${\bf s}\in S$ then it is superfluous. To this end, assume
  that $F=\{f_1,\ldots ,f_k\}$ and ${\bf s}=(p_1,\ldots ,p_k)$. Thus, we
  have $p_1f_1+ \cdots +p_kf_k=0$. On the other hand, we know that
  $ \lm_{\prec_s}({\bf s}) \mid \sig(p) $. W.l.o.g., we may assume that
  $\lm_{\prec_s}({\bf s})=\lm(p_1){\bf e_1}$. Therefore, $\poly(p)$ can be
  written as a combination $g_1f_1+ \cdots +g_kf_k$ such that $\lm(g_1)$
  divides $\lm(p_1)$. Let $t=\lm(p_1)/\lm(g_1)$. We can write $\lm(g_1)f_1$
  as a linear combination of some multiplications $mf_i$ where $m$ is a
  monomial such that $m{\bf e_i}$ is strictly smaller than
  $\lm(g_1){\bf e_1}$.  It follows that $p$ has an involutive
  representation provided that we study $tmf_i$ for each $m$ and $i$. Since
  the signature of $tmf_i$ is strictly smaller than
  $t\lm(g_1){\bf e_1}=\sig(p)$, we are sure that no loop is performed
  and therefore $p$ can be omitted.  \qed
\end{proof}

We have implemented the algorithm {\sc QuasiStable} in {\sc Maple
  17}\footnote{The {\sc Maple} code of the implementations of our
  algorithms and examples are available at {\tt
    http://amirhashemi.iut.ac.ir/softwares}} and compared its performance
with our implementation of the {\sc HDQuasiStable} algorithm presented in
\cite{casc} (it is a similar procedure applying a Hilbert driven
technique).  For this, we used some well-known examples from computer
algebra literature. All computations were done over $\mathbb{Q}$ using the
degree reverse lexicographical monomial ordering. The results are
represented in the following tables where the time and memory columns
indicate the consumed CPU time in second and amount of megabytes of used
memory, respectively. The dim column refers to the dimension of the
corresponding ideal. The columns corresponding to $C_1$ and $C_2$ show,
respectively, the number of polynomials removed by $ C_{1} $ and $ C_{2} $
criteria. The seventh column denotes the number of polynomials eliminated
by the criterion related to signature applied in {\sc NextInvBasis}
algorithm (see \cite{casc} for more details).  The eighth column shows the
number of polynomials eliminated by the Hilbert driven technique which may
be applied in {\sc NextInvBasis} algorithm to remove useless reductions,
(see \cite{casc} for more details). The ninth column shows the number of
polynomials eliminated by the syzygy criterion described in {\sc
  NextInvBasis} algorithm.  The last three columns represent, respectively,
the number of reductions to zero, the number of performed elementary linear
changes and the maximum degree attained in the computations. The
computations in this paper are performed on a personal computer with $2.60$
GHz Pentium(R) Core(TM) Dual-Core CPU, $2$ GB of RAM, $32$ bits under the
Windows $7$ operating system.

\begin{center}
{\scriptsize
\begin{tabular}{ |c||c|c|c|c|c|c|c|c|c|c|c| }

\multicolumn{1}{c}{}  \\
\hline
Weispfenning94 & time & memory& dim & $C_{1}$ & $C_{2}$ & SC  & HD & $\small{ \syz }$  & redz & lin &deg  \\
\hline
{\sc QuasiStable}  & 4.5 & 255.5 & 2 & 0  & 0 & 0 & 34 & 10 & 41 & 1 &14  \\
\hline 
{\sc HDQuasiStable} & 5.3 & 261.4 & 2 & 0 & 1 & 9 & 46 & - & 29 & 1 & 14  \\
\hline

\multicolumn{1}{c}{}  \\
\hline
Liu & time & memory & dim & $C_{1}$ & $C_{2}$ & SC  & HD & $\small{ \syz }$  & redz & lin &deg  \\
\hline
{\sc QuasiStable}  & 6.1 & 246.7 & 2 & 8 & 0 & 10 & 71 & 47  & 44  & 4 & 6  \\
\hline 
{\sc HDQuasiStable}  & 8.9 & 346.0 & 2  & 6 & 3 & 25  & 125 & - & 60  & 4 & 6  \\
\hline

\multicolumn{1}{c}{}  \\
\hline
Noon & time & memory & dim &$C_{1}$ & $C_{2}$ & SC  & HD & $\small{ \syz }$  & redz & lin &deg  \\
\hline
{\sc QuasiStable} & 74.1 & 3653.2.2 & 1 & 6 & 7 & 10  & 213 & 83 &  215  &  4 & 10 \\
\hline 
{\sc HDQuasiStable} & 72.3 & 3216.9.7 & 1 & 4 & 24  & 10 & 351 & -  & 105  &  4  &  10 \\
\hline

\multicolumn{1}{c}{}  \\
\hline
Katsura5 & time & memory & dim & $C_{1}$& $C_{2}$ & SC  & HD & $\small{ \syz }$  & redz & lin &deg  \\
\hline
{\sc QuasiStable}  & 95.7 & 4719.2 & 5& 49 & 0 & 0 & 257  & 56 & 115 & 3 & 8 \\
\hline 
{\sc HDQuasiStable}  & 120.8 & 5527.7 & 5 & 44 & 4 & 6 &  420   & - & 122 & 3 & 8 \\
\hline

\multicolumn{1}{c}{}  \\
\hline
Vermeer & time & memory & dim & $C_{1}$ & $C_{2}$ & SC  & HD & $\small{ \syz }$  & redz & lin &deg  \\
\hline
{\sc QuasiStable}  &175.5 & 8227.9  & 3 & 5 & 3 &101 &158  & 139& 343 & 3 & 13  \\
\hline 
{\sc HDQuasiStable}  &  192.5 & 8243.7 & 3 & 3 & 28 & 157 &  343 & - & 190 & 3 & 13  \\
\hline

\multicolumn{1}{c}{}  \\
\hline
Butcher & time & memory & dim &$C_{1}$ & $C_{2}$ & SC  & HD & $\small{ \syz }$  & redz & lin &deg  \\
\hline
{\sc QuasiStable}  & 290.6 & 12957.8 & 3 & 135 & 89 & 73 & 183 & 86 & 534 & 3 & 8 \\
\hline 
{\sc HDQuasiStable}  & 433.1 & 17005.5 & 3 & 178 & 178 & 219 &  355 & - & 386 & 3 & 8  \\
\hline

\end{tabular} 
}
\end{center}

As one sees for some examples, some columns are different. It is worth noting that this difference may be due to the fact that the coefficients in the linear changes  are chosen randomly and this may affect the behavior of the algorithm.
\section{Involutive Variant of the GVW Algorithm}
\label{Sec:4}
Gao et al.\  \cite{gvw} described  recently a new algorithm,  the GVW
algorithm, to compute simultaneously Gr\"obner bases for a given ideal and
for the syzygy module of the given ideal basis. In this section, we present an involutive variant of this approach and compare its efficiency with the existing algorithms to compute involutive bases. For a review of the general setting of the signature based structure that we use in this paper, we refer to \cite{gvw}. Let $ \{ f_1,\dots,f_k \} \subset \P $ be a finite set of non-zero polynomials and $\{{\bf e}_{1}, \ldots ,{\bf e}_{k} \}$  the standard basis for $ \P^{k} $. Let us fix an involutive division $\mathcal{L}$ and a monomial ordering $\prec$. Our goal is  to compute an involutive basis for $ \I= \langle f_{1}, \ldots ,f_{k} \rangle  $ and a Gr\"obner basis for $ \syz( f_{1}, \ldots ,f_{k}) $ w.r.t. $\prec_s$. Let us consider
\[ \mathcal{V}= \{ ({\bf u},v)\in \P^k \times \P \ \vert \  u_1 f_1 + \cdots + u_k f_k =v  \ {\rm with} \ {\bf u}=(u_1,\ldots ,u_k)\} \] 
as an $ \P $-submodule  of $ \P^{k+1} $. For any pair $p = ({\bf u},v) \in \P^k \times \P$, $\lm_{\prec_s}({\bf u})$ is called the {\em signature} of $p$ and is denoted by $\sig(p)$.  We define the involutive version of top-reduction defined in \cite{gvw}.
 Let $p_1 = ({\bf u}_1 , v_1 )$, $ p_2 =({\bf u}_2 , v_2 ) \in \P^k \times \P$. When $v_2$ is non-zero, we say $p_1$ is {\em involutively top-reducible} by $p_2$ if:
\begin{itemize}
\item
$v_1 $ is non-zero and $\lm(v_2 )$ $\mathcal{L}$-divides $\lm(v_1 )$ and
\item
$\lm(t{\bf u}_2 ) \preceq_{s} \lm({\bf u}_1 )$ where $t = \lm(v_1 )/  \lm(v_2 )$.
\end{itemize}
  The corresponding top-reduction is $p_1 - ctp_2 = ({\bf u}_1 - ct{\bf
    u}_2 , v_1 - ctv_2 )$ where $ c = \lc(v_1 ) / \lc(v_2 ) $. Such a
  top-reduction is called {\em regular}, if $\lm({\bf u}_1 -ct{\bf u}_2) =
  \lm({\bf u}_1)$, and {\em super} otherwise.
  
\begin{definition}
A finite subset  $G\subset  \mathcal{V}$ is called  a {\em strong
  involutive basis} for $  \mathcal{I}$ if every pair in $ \mathcal{V}$ is
involutively top-reducible by some pair in $G$.  A strong
  involutive basis  $G$ is {\em minimal} if any other strong
  involutive basis $G'$ of $  \mathcal{I}$  satisfies $\lm(G)\subseteq \lm(G')$.
\end{definition}

\begin{proposition}
Suppose that $G = \{({\bf u}_1 , v_1 ), \ldots , ({\bf u}_m , v_m )\}$ is a strong involutive basis for $ \mathcal{I}$. Then $G_{0} = \{{\bf u}_{i} \ \vert \  v_i = 0 \ , \ 1 \leq  i \leq m \} $ is a Gr\"obner basis for $\syz(f_1 , \ldots , f_k )$, and $G_1 = \{v_1,\ldots ,v_m \}$ is an involutive basis for $\I$.
\end{proposition}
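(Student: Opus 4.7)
The plan is to extract both claims from the single defining property of a strong involutive basis by instantiating the ``arbitrary pair in $\mathcal{V}$'' in two different ways: once with a zero polynomial component (to recover the syzygy module) and once with a nonzero one (to recover the ideal).

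For $G_0$, I take an arbitrary nonzero ${\bf s}\in\syz(f_1,\ldots,f_k)$; then $({\bf s},0)\in\mathcal{V}$ by the very definition of $\mathcal{V}$. By the strong involutive basis hypothesis there exists $({\bf u}_i,v_i)\in G$ that involutively top-reduces $({\bf s},0)$. Since the polynomial component of our pair vanishes, the top-reduction cannot be driven by a polynomial leading monomial, so the reducer itself must be a syzygy pair, i.e.\ $v_i=0$ (whence ${\bf u}_i\in G_0$) and $\lm_{\prec_s}({\bf u}_i)\mid_{\mathcal{L}}\lm_{\prec_s}({\bf s})$. Thus $\lm_{\prec_s}(G_0)$ $\mathcal{L}$-covers the leading module-monomials of every syzygy, which in particular yields the Gr\"obner basis property with respect to $\prec_s$.

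For $G_1$, I take a nonzero $f\in\I$ and any representation $f=h_1f_1+\cdots+h_kf_k$, so that $({\bf h},f)\in\mathcal{V}$. The strongness assumption produces some $({\bf u}_j,v_j)\in G$ that involutively top-reduces $({\bf h},f)$. Since $f\neq 0$, the definition of (involutive) top-reducibility forces $v_j\neq 0$ and $\lm(v_j)\mid_{\mathcal{L}}\lm(f)$, so $v_j\in G_1$ provides the required involutive divisor of $\lm(f)$. The inclusion $G_1\subseteq\I$ is immediate from $u_{j,1}f_1+\cdots+u_{j,k}f_k=v_j$, so $G_1$ involutively generates $\I$; if the strict involutively head-autoreduced form is required, it is obtained by iteratively discarding any $v_j$ whose leading monomial is $\mathcal{L}$-divisible by some other $\lm(v_\ell)$, which preserves covering by axioms (2)--(3) of an involutive division.

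The main obstacle is that the paper's definition of involutive top-reducibility is phrased only for the case where the reducer's polynomial component $v_2$ is nonzero, whereas the $G_0$ argument above requires reducing a pair whose polynomial component is zero. This gap is filled by the companion rule from the signature-based framework of Gao et al.\ \cite{gvw}, in which a pair with zero polynomial component is declared top-reducible by another such pair precisely when the signature of the reducer divides that of the reducee (the involutive version replacing ordinary divisibility by $\mathcal{L}$-divisibility). Once this complementary rule is in place, the two cases of the proposition become essentially parallel one-line invocations of the defining property of a strong involutive basis.
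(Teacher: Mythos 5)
The paper's own ``proof'' is only a citation of \cite[Prop.~2.2]{gvw}, and your argument is essentially that proof written out, so the overall route is the right one; however, the $G_1$ half contains a genuine gap. You correctly observe that the top-reduction relation must be completed by a rule for reducers with zero polynomial component. But in the framework of Gao et al.\ --- the one on which Theorem~\ref{thm4} and the algorithm {\sc StInvBasis} of this section are built --- that companion rule is \emph{not} restricted to reducees with zero polynomial component: a pair $(\mathbf{u}_2,0)$ top-reduces \emph{any} pair $(\mathbf{u}_1,v_1)$ with $\lm(\mathbf{u}_2)\mid\lm(\mathbf{u}_1)$, the reduction acting only on the module component and leaving $v_1$ untouched. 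Consequently your step ``since $f\neq 0$, the definition forces $v_j\neq 0$'' fails: for an arbitrary representation $(\mathbf{h},f)$ the strong-basis property may be witnessed by a syzygy pair, which gives no information whatsoever about $\lm(f)$. Your restricted companion rule would rescue that sentence, but it turns ``strong involutive basis'' into a strictly stronger condition than the one characterized by covering and eventual super reducibility later in the section, so you would be proving the proposition for a different notion than the one the algorithm produces.

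The repair is standard and works under either reading: among all $\mathbf{h}$ with $(\mathbf{h},f)\in\mathcal{V}$, choose one with $\lm_{\prec_s}(\mathbf{h})$ minimal, which is possible because $\prec_s$ is a well-ordering. If $(\mathbf{h},f)$ were top-reducible by some $(\mathbf{u}_j,0)\in G$, the corresponding reduction would yield a pair $(\mathbf{h}-ct\,\mathbf{u}_j,\,f)\in\mathcal{V}$ representing the same $f$ with strictly smaller signature, contradicting minimality; hence the reducer has $v_j\neq 0$ and $\lm(v_j)\mid_{\mathcal{L}}\lm(f)$, as you claim. The $G_0$ half is correct as written, since a pair with zero polynomial part can only be reduced by another such pair (the $v_2\neq 0$ rule explicitly requires $v_1\neq 0$), and ordinary divisibility of the leading module monomials is all that the Gr\"obner basis claim needs. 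Two minor points: $G_1$ should be read as $\{v_i \mid v_i\neq 0\}$, and the concluding head-autoreduction step you sketch is needed only to match the normalization in the definition of involutive basis; the involutive divisibility statement is the substantive content.
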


\begin{proof}
The proof is an easy consequence of the proof of  \cite[Prop. 2.2]{gvw}. \qed
\end{proof}

Let $p_{1 }= ({\bf u}_{1} , v_{1} )$ and $p_{2} = ({\bf u}_{2} , v_{2} )$
be two pairs in $ \mathcal{V}$. We say that $p_{1}$ is {\em covered} by
$p_{2}$ if $\lm({\bf u}_{2} )$ divides $\lm({\bf u}_{1} )$ and
$t \lm(v_{2} ) \prec \lm(v_{1} )$ (strictly smaller) where
$t = \lm({\bf u}_{1} ) / \lm({\bf u}_{2} )$. Also, $p$ is covered by $G$ if
it is covered by some pair in $G$. A pair $p\in \mathcal{V}$ is {\em
  eventually super reducible} by $G$ if there is a sequence of regular
top-reductions of $p$ by $G$ leading to $({\bf u}', v')$ which is no longer
regularly reducible by $G$ but super reducible by $G$.
  
\begin{theorem}
\label{thm4}
  Let  $G \subset  \mathcal{V}$ be a finite set such that, for any module monomial ${\bf m} \in  \P^{k}$, there is a pair $({\bf u}, v) \in  G$  such that $ \lm({\bf u}) \mid {\bf m}$. Then the following conditions are equivalent:
\begin{enumerate}
\item
$G$ is a strong involutive basis for $ \I$,
\item
any non-multiplicative prolongation of any element of $G$ is eventually super top-reducible by $G$,
\item
any non-multiplicative prolongation of any element in $G$ is covered by $G$.
\end{enumerate}
\end{theorem}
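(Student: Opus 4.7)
The plan is to establish the equivalence by the cyclic chain $(1)\Rightarrow(2)\Rightarrow(3)\Rightarrow(1)$, following the architecture of the original proof in \cite{gvw} but with regular Gr\"obner top-reductions replaced by their involutive counterparts and $J$-pairs replaced by non-multiplicative prolongations.

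For $(1)\Rightarrow(2)$, I would argue by termination. Given a non-multiplicative prolongation $p$ of an element of $G$, note that $p\in\mathcal{V}$ and, by (1), is involutively top-reducible. Apply regular top-reductions whenever possible; each such step preserves $\sig(p)$ but strictly lowers $\lm$ of the $v$-component, so the well-ordering $\prec$ forces termination at a pair $p^{(r)}$ no longer regularly reducible. If the $v$-component of $p^{(r)}$ is non-zero, then (1) still applies to it, and since no regular reduction is available the only remaining possibility is a super top-reduction, which is precisely what (2) demands.

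For $(2)\Rightarrow(3)$, let $p=(x_ju_q,x_jv_q)$ be a non-multiplicative prolongation of $q=(u_q,v_q)\in G$, and suppose the chain of regular reductions ends at $p^{(r)}=(u^{(r)},v^{(r)})$ with a super top-reduction by $q'=(u',v')\in G$. The super condition reads $\lm(u^{(r)})=t\,\lm(u')$ with $t=\lm(v^{(r)})/\lm(v')$; since regular reductions preserve $\lm(u^{(r)})=\lm(u_p)=x_j\lm(u_q)$, the monomial $\lm(u')$ divides $\lm(u_p)$ with quotient $t$, and moreover $t\,\lm(v')=\lm(v^{(r)})\prec\lm(v_p)$ because each regular reduction strictly lowers $\lm(v)$. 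This is exactly the cover condition. The corner case of no preceding regular reductions is handled by invoking the standing hypothesis that every module monomial of $\P^{k}$ is divisible by the $u$-leading monomial of some pair in $G$, allowing one to substitute a strictly smaller cover.

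For $(3)\Rightarrow(1)$, I would proceed by Noetherian induction on $\sig(p)$ over arbitrary $p=(u,v)\in\mathcal{V}$ with $v\neq 0$. By the standing hypothesis on $G$, pick $q=(u',v')\in G$ with $\lm(u')\mid\lm(u)$ and set $s=\lm(u)/\lm(u')$. If $s$ is a product of variables all multiplicative for $u'$, then $s\,q$ is itself an involutive top-reducer of $p$. Otherwise, $s\,q$ can be built up from $q$ by interleaving multiplicative multiplications with non-multiplicative prolongations; by (3), each such prolongation is covered by a pair whose $v$-leading monomial is strictly smaller, and substituting these covers yields a new representation of $p$ whose signature has strictly dropped, to which the induction hypothesis applies. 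The main obstacle is exactly this inductive gluing step: one must combine the covers of successive prolongations so that the resulting involutive top-reducer respects the signature bound $\preceq_{s}\lm(u)$. Making this work relies on the Schreyer-type and continuity properties of $\mathcal{L}$, which ensure that the non-multiplicative complement behaves well enough under products to permit a canonical decomposition of $s$ matching the cover structure.
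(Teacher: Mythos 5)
Your sketches of $(1)\Rightarrow(2)$ and $(2)\Rightarrow(3)$ follow the intended route (the paper simply defers these to the corresponding implications of Thm.~2.4 of Gao--Volny--Wang), but your $(3)\Rightarrow(1)$ has a genuine gap, and it is exactly the step you yourself flag as ``the main obstacle''. Two concrete problems. First, the quantity you claim ``strictly drops'' when you substitute a cover is misidentified: the signature of $p$ is fixed throughout, and replacing a non-multiplicative prolongation $x_jq$ of a pair $q=({\bf u}',v')$ by a covering pair $({\bf u}_3,v_3)$ lowers nothing on the signature side -- what it lowers is the $v$-leading monomial of the candidate reducer, since the cover condition gives $\bigl(x_j\lm({\bf u}')/\lm({\bf u}_3)\bigr)\lm(v_3)\prec x_j\lm(v')$. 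So a Noetherian induction on $\sig(p)$ has nothing to recurse on at this point. Second, the ``inductive gluing'' of covers along a decomposition of $s$ into multiplicative and non-multiplicative factors is precisely what does not go through as stated: after one substitution the new pair has its own multiplicative/non-multiplicative splitting, and there is no canonical way to continue decomposing the remaining factor of $s$. Continuity and the Schreyer property are not hypotheses of this theorem and do not rescue the step.

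The paper's proof sidesteps the gluing entirely by a double minimality argument. Take a counterexample $p=({\bf u},v)$ of minimal signature, and among all $p_1=({\bf u}_1,v_1)\in G$ with $\lm({\bf u}_1)\mid\lm({\bf u})$ (nonempty by the standing hypothesis on $G$) choose one minimizing $t\lm(v_1)$, where $t=\lm({\bf u})/\lm({\bf u}_1)$. If every variable of $t$ is multiplicative for $p_1$, then $p-ctp_1$ has strictly smaller signature, is handled by minimality of $\sig(p)$, and yields a top-reduction of $p$, a contradiction. Otherwise pick a non-multiplicative variable $x_j\mid t$; condition (3) gives a cover $p_3=({\bf u}_3,v_3)$ of $x_jp_1$, and a one-line computation shows $\lm({\bf u}_3)\mid\lm({\bf u})$ with $t_3\lm(v_3)\prec t\lm(v_1)$ for $t_3=\lm({\bf u})/\lm({\bf u}_3)$, contradicting the minimal choice of $p_1$. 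A single application of the cover condition suffices, so no iteration -- and hence no gluing -- is ever needed. If you want to keep an inductive phrasing, the correct well-founded quantity is the pair $\bigl(\sig(p),\,\min_{p_1}t\lm(v_1)\bigr)$, not $\sig(p)$ alone.
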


\begin{proof}
  The proof of all implications are similar to the proofs of the
  corresponding statements in \cite[Thm.~2.4]{gvw} except that we need some
  slight changes in the proof of $( 3 \Rightarrow 1) $. We proceed by
  reductio ad absurdum. Assume that there is a pair
  $p=({\bf u} , v )\in \mathcal{V}$ which is not involutively top-reducible
  by $G$ and has minimal signature. Then, by assumption, there exists
  $p_1 = ({\bf u}_1, v_1) \in G$ such that $\lm({\bf u})=t\lm({\bf u}_1)$
  for some $t$. Select $p_1$ such that $t\lm(v_1)$ is minimal. Let us now
  consider $tp_1$. Two cases may happen: If all variables in $t$ are
  multiplicative for $p_1$ then, $p-tp_1$ has a signature smaller than $p$
  and by assumption it has a standard representation leading to a standard
  representation for $p$ which is a contradiction. Otherwise, $t$ has a
  non-multiplicative variable. Then, $t p_1$ is covered by a pair
  $p_3 = ({\bf u}_3, v_3) \in G$. This shows that
  $t_3 \lm(v_3)\prec t \lm(v_1)$ with $t_3=t\lm({\bf u}_1)/\lm({\bf
    u}_3)$. Therefore, the polynomial part of $t_3p_3$ is smaller than
  $tv_1$ which contradicts the choice of $p_1$, and this ends the
  proof. \qed
\end{proof}

 Based on this theorem and similar to the structure of the GVW algorithm,
 we describe a variant of Gerdt's algorithm for computing strong involutive
 bases. The structure of the new algorithm is similar to the {\sc InvBasis} algorithm and therefore we omit the identical parts.

 \begin{algorithm}[H]
\caption{{\sc StInvBasis}}
\begin{algorithmic}
 \REQUIRE A finite set  $ F\subset \P$; an involutive division $ \mathcal{L} $; a monomial ordering $ \prec $ 
 \ENSURE A minimal strong involutive basis for $ \langle F \rangle $ 
 \STATE $F:=$sort$(F,\prec)$ and $T:= \lbrace  (F[1],F[1], \emptyset ,\textbf{e}_{1}) \rbrace$
 \STATE $Q:= \lbrace (F[i],F[i], \emptyset ,\textbf{e}_{i})  \ | \ i=2, \ldots ,\vert F \vert \rbrace$ and  $H:= \lbrace \rbrace$
\WHILE { $Q \neq \emptyset $}
\STATE $Q:=$sort$(Q,\prec_{s})$ and select/remove the first element $p$ from $Q$
\IF { $ p $ is not covered by $ G $, $ T $ or $ H $}
\STATE $ h:=$ {\sc InvTopReduce}$(p,T, \mathcal{L}, \prec) $
\IF {$ \poly(h)=0 $ }
\STATE $ H:= H \cup \{ \sig(p) \}$
\ENDIF
\IF{ $ \poly(h)=0 $  and  $ \lm( \poly(p)) = \lm(\anc(p)) $ }
\STATE $ Q:= \lbrace q \in Q\,\, \vert \,\, \anc(q) \neq \poly(p) \rbrace $
\ENDIF
\IF { $ \poly(h) \neq 0 $  and $ \lm(\poly(p)) \neq \lm(\poly(h)) $ }
\STATE $\vdots$\qquad  \COMMENT{Lines 19--25 of \textsc{InvBas}}
\ENDIF
\STATE $\vdots$\qquad  \COMMENT{Lines 27--30 of \textsc{InvBas}}
\ENDIF
\ENDWHILE 
\STATE  {\bf{return}} $(\poly(T),H)$
\end{algorithmic}
\end{algorithm} 
 \begin{algorithm}[H]
\caption{{\sc InvTopReduce}}
\begin{algorithmic}
 \STATE {\bf{Input:}} A quadruple $ p $; a set of quadruples $ T $; a division $ \mathcal{L} $; a monomial ordering $ \prec $ 
 \STATE {\bf{Output:}} A top-reduced form of $ p $ modulo $ T $
  \STATE  $ h := p $ 
\WHILE {  $\poly(h)$ has a term $am$ with $a\in \KK$ and $\lm(\poly(q))\mid_{\mathcal{L}} m$ with $q\in T$}
\IF {  $ m/\lm(\poly(q))\sig(q) \prec_{s} \sig(p) $ }
\STATE $ \poly(h) := \poly(h) - am/ \lt(\poly(q)).\poly(q) $
\STATE $\rep(h):=\rep(h) - am/ \lt(\poly(q)).\rep(q)$
\ENDIF
\ENDWHILE 
\RETURN $(h)$
\end{algorithmic}
\end{algorithm}
\vspace*{-0.5cm}
The proof of the next theorem is a consequence of Thm. \ref{thm4} and the termination and correctness of Gerdt's algorithm. 
 \begin{theorem}
   If $ \mathcal{L} $ is Noetherian, then {\sc StInvBasis} terminates in
   finitely many steps returning a minimal strong involutive basis for its
   input ideal.
\end{theorem}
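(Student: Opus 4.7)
The plan is to decompose the proof into termination, correctness, and minimality, following the hint that the theorem is essentially a consequence of Theorem~\ref{thm4} combined with the termination and correctness of Gerdt's algorithm already exploited in Theorem~\ref{thm2}. For termination, I would observe that the only additions to $T$ happen in the branch where {\sc InvTopReduce} returns a polynomial whose leading monomial differs from $\lm(\poly(p))$, so each new element entering $T$ contributes a strictly new leading monomial. The coverage test only suppresses work and never creates new prolongations, so the total number of insertions into $T$ is bounded by that in the original Gerdt algorithm, and the Noetherianness of $\mathcal{L}$ bounds this quantity.

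For correctness, the natural strategy is to verify condition~(3) of Theorem~\ref{thm4}: every non-multiplicative prolongation $xh$ of an element $h \in \poly(T)$ of the final $T$ is covered by the output $G = T \cup \{(\mathbf{u},0) : \sig(\mathbf{u}) \in H\}$. One traces the while-loop: each such prolongation is enqueued into $Q$ at the moment $h$ enters $T$, via the non-multiplicative prolongation step analogous to lines~27--30 of {\sc InvBasis}. When the pair is later dequeued, either it is already covered by $G$, $T$, or $H$, in which case coverage persists by monotonicity of the covering relation under enlargement of the covering set, or it is involutively top-reduced by {\sc InvTopReduce}. If the reduction yields zero, its signature enters $H$ and the prolongation is covered by an element of the syzygy part of $G$; if it yields a non-zero pair with a new leading monomial, this pair is added to $T$ and the original prolongation is covered by it via a standard-representation argument analogous to the four-case analysis in the proof of Theorem~\ref{thm2}.

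Minimality follows along the same lines as in Theorem~\ref{thm2}: the cleanup branch that moves previously accepted elements of $T$ back to $Q$ (inherited from {\sc InvBasis}), together with the head-autoreduction discipline in the branch handling $\lm(\poly(p)) \neq \lm(\poly(h))$, ensures that $\lm(\poly(T))$ is involutively head-autoreduced at the end. Combined with the strong involutive basis property obtained from Theorem~\ref{thm4}, this gives minimality in the sense of the definition preceding the proposition.

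The main obstacle, I expect, is the verification that coverage is preserved under the dynamic changes of $T$, $H$, and the implicit $G$ during the loop. One must carefully check that once a prolongation has been declared covered and discarded, later removals from $T$ (triggered by the ancestor-based cleanup when the ancestor of some element reduces to zero, i.e.\ the second {\bf if}-block inherited from {\sc InvBasis}) cannot destroy the witness for coverage. This parallels the subtleties in the correctness proof of the original GVW algorithm and will require a case analysis in the spirit of the proof of Theorem~\ref{thm2}, with particular attention to the interaction between the signature-based coverage test and the ancestor-based elimination of superfluous pairs.
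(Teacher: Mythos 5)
Your proposal is correct and follows essentially the same route as the paper, which justifies this theorem in a single sentence as ``a consequence of Thm.~\ref{thm4} and the termination and correctness of Gerdt's algorithm''; your decomposition into termination (via Noetherianity and the bound on insertions into $T$), correctness (via condition~(3) of Theorem~\ref{thm4}), and minimality (via the head-autoreduction discipline) is exactly the intended expansion of that remark. The subtlety you flag about coverage witnesses surviving the dynamic removal of elements from $T$ is a genuine point that the paper glosses over entirely, so your more careful treatment only strengthens the argument.
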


We have implemented the {\sc StInvBasis} algorithm in {\sc Maple 17} and compared its performance with our implementation of {\sc InvolutiveBasis} algorithm (see \cite{casc}) and {\sc VarGerdt} algorithm (a variant of Gerdt's algorithm, see \cite{zbMATH06264259}).
\begin{center}
{\scriptsize 
\begin{tabular}{ |c||c|c|c|c|c|c|c|c| } 
\hline
Liu & time & memory & $C_{1}$ & $C_{2}$ &  SC  &  cover   & redz & deg \\
\hline
{\sc StInvBasis}  & .390 & 14.806 & - & - & - & 17 & 20 & 6  \\
\hline 
{\sc InvolutiveBasis}& .748 & 23.830 & 4 & 3 & 2  & - & 18 & 6  \\
\hline
{\sc vargerdt}  & 1.653 & 64.877 & 6 & 3 & - & - & 18 & 19  \\
\hline

\multicolumn{1}{c}{}  \\
\hline
Noon & time & memory & $C_{1}$ & $C_{2}$ & SC &  cover   & redz & deg \\
\hline
{\sc StInvBasis}  & 1.870 & 75.213 & - & - & - & 54 & 42 & 10 \\
\hline 
{\sc InvolutiveBasis} & 2.620 & 105.641 & 4 & 15 & 6  & - & 50 & 10 \\
\hline 
{\sc vargerdt}  & 12.32 & 454.573 & 6 & 9 & -  & - & 56 & 10  \\
\hline

\multicolumn{1}{c}{}  \\
  \hline
Haas3 & time & memory & $C_{1}$ & $C_{2}$ & SC & cover   & redz & deg \\
\hline
{\sc StInvBasis}  & 157.623 & 6354.493 & - & - & - & 490 & 8 & 33   \\
\hline 
{\sc InvolutiveBasis}  & 22.345 & 833.0 & 0 & 0 & 83  &- & 152 & 33  \\
\hline
{\sc vargerdt}  & 137.733 & 5032.295 & 0 & 98 & -  & - & 255 & 33  \\
\hline

\multicolumn{1}{c}{}  \\
\hline
Sturmfels-Eisenbud & time & memory & $C_{1}$ & $C_{2}$ & SC & cover & redz & deg \\
\hline
{\sc StInvBasis}  & 2442.414 & 120887.953 & - & - & - & 634 & 29 & 8  \\
\hline 
{\sc InvolutiveBasis}  & 24.70 & 951.070 & 28 & 103  & 95  & - & 81 & 6  \\
\hline
{\sc vargerdt}  & 59.32 & 2389.329 & 43 & 212 & -  & - & 91 & 6  \\
\hline

\multicolumn{1}{c}{}  \\
\hline
Weispfenning94 & time & memory & $C_{1}$ & $C_{2}$ & SC & cover & redz & deg \\
\hline
{\sc StInvBasis}  & 183.129 & 8287.044 & - & - & - & 588 & 28 & 18   \\
\hline 
{\sc InvolutiveBasis}  & 1.09 & 45.980 &0 & 1  & 9  & - & 28 & 10  \\
\hline
{\sc vargerdt}  & 4.305 & 168.589 & 0 & 9 & -  & - & 38 & 15  \\
\hline

\end{tabular}
}
\end{center}
As we observe, the performance of the new algorithm is not in general
better than that of the others. This is due to the signature-based structure of
the new algorithm which does not allow to perform full normal forms.

 \section*{Acknowledgments.}   The research of the second author was in part supported by a grant from IPM (No. 95550420). The work of the third author was partially performed as part of the H2020-FETOPEN-2016-2017-CSA project $SC^{2}$ (712689).

\bibliographystyle{acm}

\begin{thebibliography}{10}

\bibitem{casc}
{\sc {Binaei}, B., {Hashemi}, A., and {Seiler}, W.~M.}
\newblock {Improved computation of involutive bases.}
\newblock In {\em {Proceedings of CASC'16.}} Cham: Springer, 2016, pp.~58--72.

\bibitem{Bruno1}
{\sc {Buchberger}, B.}
\newblock {\em {Ein Algorithmus zum Auffinden der Basiselemente des
  Restklassenringes nach einem nulldimensionalen Polynomideal.}}
\newblock Innsbruck: Univ. Innsbruck, Mathematisches Institut (Diss.), 1965.

\bibitem{Bruno2}
{\sc {Buchberger}, B.}
\newblock {A criterion for detecting unnecessary reductions in the construction
  of Gr\"obner-bases.}
\newblock {Lect. Notes Comput. Sci. 72, 3-21 (1979).}, 1979.

\bibitem{little}
{\sc {Cox}, D., {Little}, J., and {O'Shea}, D.}
\newblock {\em {Ideals, varieties, and algorithms. 3rd ed.}}
\newblock New York, NY: Springer, 2007.

\bibitem{cox}
{\sc Cox, D.~A., Little, J., and O'Shea, D.}
\newblock {\em Using algebraic geometry. 2rd ed.}, vol.~185 of {\em Graduate
  Texts in Mathematics}.
\newblock Springer, New York, 2005.

\bibitem{F4}
{\sc {Faug\`ere}, J.-C.}
\newblock {A new efficient algorithm for computing Gr\"obner bases $(F_4)$.}
\newblock {\em {J. Pure Appl. Algebra} 139}, 1-3 (1999), 61--88.

\bibitem{F5}
{\sc {Faug\`ere}, J.-C.}
\newblock {A new efficient algorithm for computing Gr\"obner bases without
  reduction to zero $(F_5)$.}
\newblock In {\em {Proceedings of ISSAC'02.}} 2002, pp.~75--83.

\bibitem{gvw}
{\sc {Gao}, S., {Volny}, F.~I., and {Wang}, M.}
\newblock {A new framework for computing Gr\"obner bases.}
\newblock {\em {Math. Comput.} 85}, 297 (2016), 449--465.

\bibitem{gm}
{\sc {Gebauer}, R., and {M\"oller}, H.}
\newblock {On an installation of Buchberger's algorithm.}
\newblock {\em {J. Symb. Comput.} 6}, 2-3 (1988), 275--286.

\bibitem{gerdt}
{\sc {Gerdt}, V.~P.}
\newblock {Involutive algorithms for computing Gr\"obner bases.}
\newblock In {\em {Computational commutative and non-commutative algebraic
  geometry. Proceedings of the NATO Advanced Research Workshop.}} Amsterdam:
  IOS Press, 2005, pp.~199--225.

\bibitem{zbMATH01969239}
{\sc {Gerdt}, V.~P., and {Blinkov}, Y.~A.}
\newblock {Involutive bases of polynomial ideals.}
\newblock {\em {Math. Comput. Simul.} 45}, 5-6 (1998), 519--541.

\bibitem{zbMATH06264259}
{\sc {Gerdt}, V.~P., {Hashemi}, A., and {M.-Alizadeh}, B.}
\newblock {Involutive bases algorithm incorporating F$_5$ criterion.}
\newblock {\em {J. Symb. Comput.} 59\/} (2013), 1--20.

\bibitem{janet}
{\sc {Janet}, M.}
\newblock {Sur les syst\`emes d'\'equations aux d\'eriv\'ees partielles.}
\newblock {\em {C. R. Acad. Sci., Paris} 170\/} (1920), 1101--1103.

\bibitem{Daniel83}
{\sc {Lazard}, D.}
\newblock {Gr\"obner bases, Gaussian elimination and resolution of systems of
  algebraic equations.}
\newblock {Lect. Notes Comput. Sci. 162, 146-156.}, 1983.

\bibitem{MMT}
{\sc {M\"oller}, H., {Mora}, T., and {Traverso}, C.}
\newblock {Gr\"obner bases computation using syzygies.}
\newblock In {\em {Proceedings of ISSAC'92.}} 1992, pp.~320--328.

\bibitem{pommaret}
{\sc {Pommaret}, J.}
\newblock {Systems of partial differential equations and Lie pseudogroups.}
\newblock {Gordon and Breach Science Publishers.}, 1978.

\bibitem{schreyer}
{\sc {Schreyer}, F.-O.}
\newblock {Die Berechnung von Syzygien mit dem verallgemeinerten
  Weierstrass'schen Divisionssatz.}
\newblock Master's thesis, University of Hamburg, Germany, 1980.

\bibitem{Seiler2}
{\sc {Seiler}, W.~M.}
\newblock {A combinatorial approach to involution and $\delta $-regularity. II:
  Structure analysis of polynomial modules with Pommaret bases.}
\newblock {\em {Appl. Algebra Eng. Commun. Comput.} 20}, 3-4 (2009), 261--338.

\bibitem{Seiler}
{\sc {Seiler}, W.~M.}
\newblock {\em {Involution. The formal theory of differential equations and its
  applications in computer algebra.}}
\newblock Berlin: Springer, 2010.

\bibitem{thomas}
{\sc {Thomas}, J.~M.}
\newblock {Differential systems.}
\newblock {New York: AMS.}, 1937.

\bibitem{Wall}
{\sc {Wall}, B.}
\newblock On the computation of syzygies.
\newblock {\em SIGSAM Bull. 23}, 4 (1989), 5--14.

\bibitem{zharkov}
{\sc {Zharkov}, A., and {Blinkov}, Y.}
\newblock {Involution approach to investigating polynomial systems.}
\newblock {\em {Math. Comput. Simul.} 42}, 4 (1996), 323--332.

\end{thebibliography}

\end{document}